\pgfplotsset{compat=1.15}
\theoremstyle{plain}
\newtheorem{theorem}{Theorem}[section]
\newtheorem{proposition}[theorem]{Proposition}
\newtheorem{lemma}[theorem]{Lemma}
\newtheorem{corollary}[theorem]{Corollary}
\newtheorem{thmalpha}{Theorem}
\theoremstyle{definition}
\newtheorem{question}[theorem]{Question}
\theoremstyle{remark}
\DeclareMathOperator{\Cor}{Cor}
\DeclareMathOperator{\Aut}{Aut}
\DeclareMathOperator{\Out}{Out}
\DeclareMathOperator{\C}{C}
\DeclareMathOperator{\PSL}{PSL}
\DeclareMathOperator{\Sz}{Sz}
\DeclareMathOperator{\OO}{\mathcal{O}}
\DeclareMathOperator{\CC}{\mathcal{C}}
\DeclareMathOperator{\PG}{\mathbf{P}\mathbf{G}}
\DeclareMathOperator{\PGL}{PGL}
\DeclareMathOperator{\PGO}{PGO}
\DeclareMathOperator{\AGL}{AGL}
\DeclareMathOperator{\FF}{\mathbb{F}}
\DeclareMathOperator{\PGamma}{P\Gamma L}
\DeclareFontFamily{OMX}{MnSymbolE}{}
\DeclareSymbolFont{MnLargeSymbols}{OMX}{MnSymbolE}{m}{n}
\DeclareFontShape{OMX}{MnSymbolE}{m}{n}{
    <-6>  MnSymbolE5
   <6-7>  MnSymbolE6
   <7-8>  MnSymbolE7
   <8-9>  MnSymbolE8
   <9-10> MnSymbolE9
  <10-12> MnSymbolE10
  <12->   MnSymbolE12
}{}
\DeclareFontShape{OMX}{MnSymbolE}{b}{n}{
    <-6>  MnSymbolE-Bold5
   <6-7>  MnSymbolE-Bold6
   <7-8>  MnSymbolE-Bold7
   <8-9>  MnSymbolE-Bold8
   <9-10> MnSymbolE-Bold9
  <10-12> MnSymbolE-Bold10
  <12->   MnSymbolE-Bold12
}{}
\let\llangle\@undefined
\let\rrangle\@undefined
\DeclareMathDelimiter{\llangle}{\mathopen}%
                     {MnLargeSymbols}{'164}{MnLargeSymbols}{'164}
\DeclareMathDelimiter{\rrangle}{\mathclose}%
                     {MnLargeSymbols}{'171}{MnLargeSymbols}{'171}
\title{Triples of involutions in $\PGL(2,q)$ and their incidence geometries}
\author{Philippe Tranchida}
\address{Philippe Tranchida, Max Planck Institute for Mathematics in the Sciences, Inselstrasse 22 D-04107 Leipzig, Orcid number 0000-0003-0744-4934.}
\curraddr{}
\email{tranchida.philippe@gmail.com}
\urladdr{}
\date{\today}
\subjclass{51A10,51E24,20C33, 51E20}{}%{}
\keywords{Projective linear groups, projective plane, incidence geometry}
\begin{document}

\begin{abstract}

    For $q = p^n$ with $p$ an odd prime, the projective linear group $\PGL(2,q)$ can be seen as the stabilizer of a conic $\OO$ in a projective plane $\pi = \PG(2,q)$. In that setting, involutions of $\PGL(2,q)$ correspond bijectively to points of $\pi$ not in $\OO$. Triples of involutions $\{ \alpha_P,\alpha_Q,\alpha_R \}$ of $\PGL(2,q)$ can then be seen also as triples of points $\{P,Q,R\}$ of $\pi$. 
    We investigate the interplay between algebraic properties of the group $H = \langle \alpha_P,\alpha_Q,\alpha_R \rangle$ generated by three involutions and geometric properties of the triple of points $\{P,Q,R\}$. In particular, we show that the coset geometry $\Gamma = \Gamma(H,(H_0,H_1,H_2))$, where $H_0 = \langle \alpha_Q,\alpha_R \rangle, H_1 = \langle \alpha_P,\alpha_R \rangle$ and $H_2 = \langle \alpha_P,\alpha_Q \rangle$ is a regular hypertope if and only if $\{P,Q,R\}$ is a strongly non self-polar triangle, a terminology we introduce.
    This entirely characterizes hypertopes of rank $3$ with automorphism group a subgroup of $\PGL(2,q)$. 
    As a corollary, we obtain the existence of hypertopes of rank $3$ with non linear diagrams and with automorphism group $\PGL(2,q)$, for any $q = p^n$ with $p$ an odd prime. We also study in more details the case where the triangle $\{P,Q,R\}$ is tangent to $\OO$.
 
\end{abstract}

\maketitle

\section{Introduction}

Realising groups as automorphism groups of geometric structures is a common and ubiquitous theme in mathematics. For the projective linear groups $\PGL(2,q)$, a lot of literature on the subject already exists.

In \cite{cherkassoff1993groups}, Cherkassoff and Sjerve show that $\PGL(2,q)$, for $q$ a power of an odd prime, is generated by three involutions, two of which commute. This implies that $\PGL(2,q)$ is the automorphism group of a regular abstract polyhedra. Indeed, automorphism groups of abstract polytopes correspond bijectively to string C-groups These groups are generated by involutions, some of which must commute, and all of which must satisfy the so-called intersection property. See \cite{mcmullen2002abstract} for more information on abstract polytopes.

On the other hand, Leemans and Schulte ~\cite{Leemans2009Polytope} prove that $\PGL(2,q)$ cannot be the automorphism group of a polytope of rank $5$ or more, and that, if it is of rank $4$, it must necessarily be the $4$-simplex and $q$ must be equal to $5$. In the chiral world, Leemans, Moerenhout and O’Reilly-Regueiro ~\cite{ChiralPolyPGL} investigate whether $\PGL(2,q)$ and $\PSL(2,q)$ can be the automorphism groups of chiral polytopes. In particular, they show that it can only happen for chiral polytopes of rank $4$.
\begin{comment}
Automorphism groups of abstract regular polyhedra are called string $C-$group. They are groups generated by three involutions, two of which commute, and that satisfy an additional property, called the intersection property. Conversely, from any string $C$-group of rank $3$, it is possible to reconstruct the associated regular polyhedra (\textcolor{blue}{what is the best reference here?}).
\end{comment}

Conder, Potočnik and Širáň ~\cite{RegularHypermapsPGL} study regular hypermaps for projective linear groups. They enumerate all the triples of involutions that generate the group $\PGL(2,q)$. 

Cameron, Omidi and Tayfeh-Rezaie ~\cite{3-design} investigate instead when $\PGL(2,q)$ is the automorphism group of a $3-(q+1,k,l)$ design.

In \cite{HighlySymmetricHypertopes}, Leemans, Fernandes and Weiss introduce a generalization of regular polytopes, that they call regular hypertopes. The main idea is to get rid of the condition that forces an abstract polytope to have a linear diagram. More precisely, regular hypertopes are defined as thin, residually connected and flag-transitive incidence geometries. Their automorphism group are also generated by involutions, with no commuting conditions, and also satisfy the intersection property. Such groups are called C-groups. Contrary to the situation for polytopes, it is not always possible to reconstruct a regular hypertope from any given C-group.
Indeed, the flag-transitivity of the associated hypertope is not guaranteed (see \cite[Example 4.4]{HighlySymmetricHypertopes}, for example).

\begin{comment}
In~\cite{LeemansStokes2019}, Leemans and Stokes showed how to construct coset geometries having trialities by using a group $G$ that has outer automorphisms of order three. They gave examples for the smallest simple Suzuki group $\Sz(8)$, some small $\PSL(2,q^3)$ groups (with $q\in \{2, 3, 4, 5, 8\}$), and the Mathieu group $M_{11}$.
They used this technique in~\cite{leemans2022incidence} to construct the first infinite family of coset geometries admitting trialities but no dualities. These geometries are of rank four, and are constructed using regular maps of Wilson class III for the groups $\PSL(2,q^3)$, where $q = p^{n}$ for a prime $p$ and a positive integer $n > 0$.
These geometries are not flag transitive. The group $\PSL(2,q^3)$ has two orbits on their chambers.

In \cite{leemans2023flag}, Leemans, Stokes and the author showed how to construct the first infinite family of incidence geometries that are thin, residually connected and flag-transitive (i.e: hypertopes) and that admit trialities but no dualities. This was done by constructing chamber systems from the action of the Suzuki groups $\Sz(q^3)$ and their defining Suzuki-Tits ovoid. 
\end{comment}

In this paper, we give a sufficient and necessary condition for a subgroup $H$, generated by involutions, of $\PGL(2,q)$ to be the automorphism group of a regular hypertope of rank $3$. We now summarize the main results of the paper.

Throughout this article, we will consider the group $\PGL(2,q)$ as the stabilizer of a conic $\OO$ in a projective plane $\pi = \PG(2,q)$. In this action, every involutions of $\PGL(2,q)$ is a perspectivity of $\pi$. Moreover, for every point $X \in \pi - \OO$, there in a unique involution $\alpha_X$ of $\PGL(2,q)$ that has $X$ as its center. Let $\alpha_P,\alpha_Q$ and $\alpha_R$ be three involutions of $\PGL(2,q)$, with centers $P,Q$ and $R$ respectively. Let $H$ be the subgroup of $\PGL(2,q)$ generated by $\alpha_P,\alpha_R$ and $\alpha_Q$ and let $H_0 = \langle \alpha_Q,\alpha_R \rangle, H_1 = \langle \alpha_P \alpha_R \rangle$ and $H_2 = \langle \alpha_P, \alpha_R \rangle$. We can then consider the coset geometry $\Gamma = \Gamma(H,(H_0,H_1,H_2))$.

Our first main Theorem completely characterizes in geometry terms when this geometry $\Gamma$ is an hypertope. For the definition of strong self-polarity, see the beginning of section \ref{sec:prelim}.

\begin{thmalpha}[Theorem \ref{thm:main}]
    Let $\alpha_P,\alpha_Q$ and $\alpha_R$ be a triple of involutions of $\PGL(2,q)$, whose respective centers are $P,Q$, and $R$ and let $H_0 = \langle \alpha_Q,\alpha_R \rangle$, $H_1 = \langle \alpha_P \alpha_R \rangle$, $H_2 = \langle \alpha_P, \alpha_R \rangle$ and $H = \langle \alpha_P,\alpha_Q,\alpha_R \rangle$. Then the followings are equivalent
    \begin{enumerate}
        \item The points $P,Q$ and $R$ are not on a line, and the triangle $\{\alpha_P,\alpha_Q,\alpha_R\}$ of involutions is strongly non-self polar,
        \item The coset geometry $\Gamma = \Gamma(H,(H_0,H_1,H_2))$ is an hypertope of rank $3$.
    \end{enumerate}
\end{thmalpha}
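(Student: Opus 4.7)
The plan is to decompose the hypertope property for the coset geometry $\Gamma$ into thinness, residual connectedness, and flag-transitivity, and then match each ingredient (together with the C-group intersection property) against the two geometric conditions in (1).

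Step 1. I would first check that $\Gamma$ is automatically thin whenever $\alpha_P,\alpha_Q,\alpha_R$ are pairwise distinct. Each rank-two residue is a coset geometry of a dihedral group $H_i=\langle\alpha_j,\alpha_k\rangle$ on its two cyclic subgroups of order two, and such a coset geometry is always a thin generalized $m$-gon, where $m$ is the order of $\alpha_j\alpha_k$.

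Step 2. Residual connectedness and flag-transitivity, for a rank-3 coset geometry of this form, are equivalent to $H$ being a C-group on $\{\alpha_P,\alpha_Q,\alpha_R\}$, which in rank $3$ boils down to the intersection property
\[
H_0\cap H_1=\langle\alpha_R\rangle,\quad H_0\cap H_2=\langle\alpha_Q\rangle,\quad H_1\cap H_2=\langle\alpha_P\rangle,
\]
together with the requirement that $H$ genuinely needs all three generators. The latter corresponds to the non-collinearity hypothesis: if $P,Q,R$ lie on a common line $\ell$, then one can show, using the description of involutions of $\PGL(2,q)$ as perspectivities with pole–polar center–axis pair, that the third involution is already contained in the dihedral group generated by the other two (the three centers lie on $\ell$, which forces $\alpha_P\alpha_Q\alpha_R$ to collapse into one of the $H_i$); conversely non-collinearity ensures the rank is truly $3$.

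Step 3. The heart of the argument is the translation of the intersection property into strong non-self-polarity. I would work through the dictionary $X\leftrightarrow \alpha_X$, with axis equal to the polar $\ell_X$ of $X$ with respect to $\OO$. A non-trivial element of $H_i\cap H_j$ is forced to be an involution, and a direct analysis of the dihedral group $\langle\alpha_X,\alpha_Y\rangle$ shows that its involutions have centers collinear with $X$ and $Y$; hence an unwanted element of $H_i\cap H_j$ would be an involution whose center lies on two of the three sides of the triangle $\{P,Q,R\}$, i.e.\ at a vertex. Unpacking when such a coincidence occurs reveals that it happens precisely when one of the vertices is the pole of the opposite side with respect to $\OO$, which is exactly the situation that strong non-self-polarity is designed to forbid. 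I expect this step to be the main obstacle, because it requires a careful case analysis of the orders and fixed-point structure of $\langle\alpha_X,\alpha_Y\rangle$ depending on whether the line $XY$ is secant, tangent, or external to $\OO$.

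Step 4. Assembling the two directions is then routine. For $(1)\Rightarrow(2)$, non-collinearity gives genuine rank three (Step 2) and strong non-self-polarity gives the intersection property (Step 3); combined with the automatic thinness of Step 1 this yields the hypertope. For $(2)\Rightarrow(1)$, collinearity of $P,Q,R$ would make $\Gamma$ degenerate to rank less than $3$ and therefore not a hypertope of rank $3$, while failure of strong non-self-polarity at some vertex produces an extra involution violating the C-group intersection property, again preventing $\Gamma$ from being a hypertope.
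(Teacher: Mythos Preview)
There is a genuine gap in Step~2. You assert that residual connectedness and flag-transitivity together are equivalent to the C-group intersection property $H_i\cap H_j=\langle\alpha_k\rangle$. This is false: the intersection property yields residual connectedness (via Theorem~\ref{prop:RC}), but flag-transitivity is a \emph{separate} condition, namely Tits' criterion $H_i\cap H_jH_k=(H_i\cap H_j)(H_i\cap H_k)$ (Theorem~\ref{thm:FT}). The paper stresses this point explicitly: a C-group need not give a hypertope precisely because flag-transitivity may fail. In the paper's proof the intersection property $H_i\cap H_j=\langle\alpha_k\rangle$ is obtained rather cheaply from Proposition~\ref{prop:stabilizerIntersection} and only requires $\Delta$ to be proper (or at worst non-self-polar). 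The hard work, and the place where \emph{strong} non-self-polarity is actually used, is the verification of $H_1\cap H_2H_0=\{e,\alpha_0,\alpha_2,\alpha_0\alpha_2\}$, carried out by a careful case analysis on products $g_2g_0$ with $g_2\in H_2$, $g_0\in H_0$.

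This misplacement also distorts your Step~3. You describe strong non-self-polarity as ``one of the vertices is the pole of the opposite side'', but that is merely (failure of) \emph{properness}. Strong non-self-polarity is the absence of a self-polar triangle $\{X,Y,Z\}$ with $X,Y,Z$ lying on three different \emph{sides} $(\alpha_P,\alpha_Q),(\alpha_Q,\alpha_R),(\alpha_P,\alpha_R)$ of $\Delta$ (sets of centers of involutions in the three dihedral groups, not just the three lines). Such a configuration gives $\alpha_X\alpha_Y=\alpha_Z$ with the three factors in distinct $H_i$'s, which violates the flag-transitivity equation, not the pairwise intersection property. Your outline therefore proves a weaker statement (roughly: proper $\Leftrightarrow$ intersection property) and leaves the equivalence with hypertope-hood unproven in both directions. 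Finally, your collinear case in Step~2 is also inaccurate: three involutions with collinear centers need not satisfy $\alpha_R\in\langle\alpha_P,\alpha_Q\rangle$; the paper handles this case separately via Lemmas~\ref{lem:HypertopeDihedral} and~\ref{lem:HypertopeAGL}.
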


In particular, this theorem can be used to show the existence of hypertopes, with non linear diagrams, of rank $3$ for the groups $\PGL(2,q)$.

\begin{thmalpha}[Corollary \ref{coro:PGLnonLinear}]
    For any $q = p^n$ for some odd prime $p$, there exist an hypertope of rank $3$ whose diagram is not linear and whose automorphism group is $\PGL(2,q)$.
\end{thmalpha}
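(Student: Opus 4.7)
The plan is to reduce the corollary to Theorem A and exhibit, for each $q = p^n$ with $p$ odd, a single triple of involutions $\alpha_P, \alpha_Q, \alpha_R \in \PGL(2,q)$ with three properties: (i) the centers $P, Q, R$ are non-collinear and the triangle $\{P,Q,R\}$ is strongly non-self-polar, so that by Theorem A the coset geometry $\Gamma$ is an hypertope of rank $3$; (ii) no two of $\alpha_P, \alpha_Q, \alpha_R$ commute, which forces every edge label in the diagram of $\Gamma$ to exceed $2$ and hence the diagram to be a triangle rather than a string; and (iii) the group $H = \langle \alpha_P, \alpha_Q, \alpha_R \rangle$ equals $\PGL(2,q)$.

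The translation of (ii) into the geometry of $\pi$ is straightforward: two involutions of $\PGL(2,q)$ commute if and only if their centers are conjugate with respect to the conic $\OO$, so (ii) amounts to requiring that no two of $P, Q, R$ lie on each other's polar line with respect to $\OO$. This is a visibly weaker condition than strong non-self-polarity, so I would first check that the notion from Section \ref{sec:prelim} does not already imply the existence of a commuting pair in general. For (iii), I would invoke Dickson's classification of the subgroups of $\PGL(2,q)$. By arranging for $\alpha_P \alpha_Q$ to have order $q+1$ or $q-1$, any subgroup of $\PGL(2,q)$ containing it is either the normalizer of a maximal torus or the full group; then choosing a third involution $\alpha_R$ lying outside this dihedral subgroup, outside any subfield $\PGL(2,q')$, and outside the exceptional subgroups $A_4, S_4, A_5$ would force $H$ to contain $\PSL(2,q)$. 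To separate $\PGL(2,q)$ from $\PSL(2,q)$ I would take at least one of the three involutions to be a perspectivity whose pair of fixed points on $\OO$ has non-square cross-ratio, so that it lies in $\PGL(2,q) \setminus \PSL(2,q)$.

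The main obstacle I anticipate is showing that all of these constraints can be satisfied simultaneously. For $q$ sufficiently large, a counting argument should suffice: there are on the order of $q^6$ ordered triples in $\pi \setminus \OO$, while each obstruction (collinearity, pairwise conjugacy, failure of strong non-self-polarity, or containment in a fixed proper subgroup) cuts out a subvariety of lower order, so a generic triple survives. The genuine difficulty lies in the small cases $q \in \{3, 5, 7, 9\}$, where the exceptional and subfield subgroups can be comparable in size to $\PGL(2,q)$ itself and a dimension count is too crude; for these I would give an explicit matrix construction, computing the orders of the pairwise products and verifying by hand that the centers $P, Q, R$ satisfy the strong non-self-polarity condition of Section \ref{sec:prelim}.
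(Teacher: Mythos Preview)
Your outline is broadly on the right track---reduce to Theorem~A, arrange one product $\alpha_P\alpha_Q$ to have order $q\pm 1$ so as to force generation, and check that no two of the three involutions commute---but you are missing the one observation that makes the whole argument short and uniform in $q$, and as a result you end up proposing a counting argument plus small-$q$ casework that the paper avoids entirely.

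The point you overlook is Proposition~\ref{prop:notPSL}: if \emph{all three} involutions are chosen in $\PGL(2,q)\setminus\PSL(2,q)$, then the triangle $\Delta=\{\alpha_P,\alpha_Q,\alpha_R\}$ is automatically strongly non self-polar. The reason is a two-line determinant trick. Define $\det(g)=1$ if $g\in\PSL(2,q)$ and $\det(g)=-1$ otherwise; this is multiplicative. In each $H_i$ the non-central involutions then all have determinant $-1$. A self-polar sub-triangle with vertices on three different sides of $\Delta$ would give $\alpha_X\alpha_Y=\alpha_Z$ with $\det(\alpha_X)\det(\alpha_Y)=1\neq -1=\det(\alpha_Z)$, a contradiction. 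So condition~(i) comes for free once you commit to taking all three involutions outside $\PSL(2,q)$, not just one as you suggest.

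With that in hand the paper's construction is completely explicit and works for every $q$: pick $\alpha_P\in\PGL(2,q)\setminus\PSL(2,q)$, a non-tangent line $l$ through $P$, and $\alpha_Q\in\PGL(2,q)\setminus\PSL(2,q)$ with $Q\in l$ so that $\alpha_P\alpha_Q$ has order $q\pm 1$; then choose any $\alpha_R\in\PGL(2,q)\setminus\PSL(2,q)$ with $R\notin l$ and whose axis avoids $P$ and $Q$ (this last ensures no commuting pair, hence a non-linear diagram). Theorem~\ref{thm:generationGeneral} already restricts $H$ to $\PGL(2,q_0)$, $\PSL(2,q_0)$, $A_5$ or $S_4$, and none of the proper ones contains an element of order $q\pm 1$, so $H=\PGL(2,q)$. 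There is no need for a genericity count or for treating $q\in\{3,5,7,9\}$ separately.
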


Finally, we look at the special case where the triple of points $\{P,Q,R\}$ forms a triangle made of tangent lines to the conic $\OO$. In this case, the group generated by the three involutions $\alpha_P,\alpha_Q$ and $\alpha_R$ must be $\PGL(2,p)$ or $\PSL(2,p)$, no matter the value of $q = p^n$ we started from. The associated hypertopes $\Gamma$ also necessarily have full correlation group. In the case where the starting projective plane $\pi$ admits a collineations $\tau$ of order $3$ which is not a projectivity (i.e: q is a cube), we remark that there must exist a subplane $\PGL(2,p) \subset \pi$ on which $\tau$ is in fact a projectivity.

\begin{comment}
We define an incidence geometry $\Gamma' = \Gamma'(X,* ,t)$ over $I = \{0,1,2 \}$ as follows:
\begin{itemize}
    \item The set $X$ is the set of pairs $((l,i), \{\gamma, \gamma^{-1} \})$ where $l$ is a line of $\PG(2,p)$ tangent to $\CC$, $i \in I$ and $\gamma$ is an element of the unique cyclic subgroup of order $p$ in the stabilizer of $l$ in $\PGL(2,p)$. For any element $x \in X$, we denote by $l_x$ its underlying line, and by $\gamma_x$ its underlying group element.
    \item For any $x \in X$, we have that $x = ((l,i), \{\gamma, \gamma^{-1} \})$ and we define $\tau(x) = i$.
    \item Let $x$ and $y$ be two elements of $X$ such that $\tau(x) \neq \tau(y)$ and let $P = (l_x,l_y)$. We say that $x * y$ if and only if one of the lines going through a point of $\{\gamma_x(P),\gamma_x^{-1}(P)\}$ and a point of  $\{\gamma_y(P),\gamma_y^{-1}(P)\}$ is tangent to $\CC$.
\end{itemize}
\end{comment}

\begin{thmalpha}[Theorem \ref{thm:generationTangent}, Corollaries \ref{coro:tangent} and \ref{coro:coliToproj}]
    Let $\alpha_P,\alpha_Q$ and $\alpha_R$ be a triple of involutions of $\PGL(2,q)$, whose respective centers are $P,Q$ and $R$, such that the triangle $\{P,Q,R\}$ is tangent to the conic $\OO$. The followings hold:
    \begin{enumerate}
        \item The group $H = \langle \alpha_P,\alpha_Q,\alpha_R \rangle$ is isomorphic to $\PGL(2,p)$ or to $\PSL(2,p)$, depending on whether $p \equiv 1 \mod 4$ or $p \equiv 3 \mod 4$,
        \item If $H = \PGL(2,q)$, the coset geometry $\Gamma = \Gamma(H,(H_0,H_1,H_2))$ is an hypertope of rank $3$ such that $\Cor(\Gamma)/ \Aut(\Gamma) = S_3$, where $H_0 = \langle \alpha_Q,\alpha_R \rangle, H_1 = \langle \alpha_P \alpha_R \rangle$ and $H_2 = \langle \alpha_P, \alpha_R \rangle$. 
        \item If $q = q_0^3$, the points $P,Q$ and $R$ can be chosen such that they are an orbit of a triality $\tau$. Moreover, the action of $\tau$ on the projective subplane $\PG(2,p) \subset \pi $ corresponding to $H$ is a projectivity.
    \end{enumerate}
\end{thmalpha}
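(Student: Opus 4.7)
My plan is to handle the three parts in order, with (1) supplying the structural backbone needed by (2) and (3).

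For part (1), I exploit the fact that $\PGL(2,q)$ acts $3$-transitively on $\OO \cong \mathbb{P}^1(\mathbb{F}_q)$ to normalise the three tangent points of the sides of the triangle $\{P,Q,R\}$ to $0$, $1$, $\infty$. Each involution $\alpha_X$ fixes on $\OO$ the two tangent points belonging to the two sides through $X$, so in these normalised coordinates all three involutions admit explicit matrix representatives with entries in $\mathbb{F}_p$ and common determinant $-1$. A short calculation then shows that each pairwise product $\alpha_X \alpha_Y$ is unipotent, fixing precisely the tangent point common to the fixed-point sets of its two factors. Since two non-commuting unipotents of $\PGL(2,\mathbb{F}_p)$ with distinct fixed points generate $\PSL(2,\mathbb{F}_p)$, the even-part subgroup of $H$ is exactly $\PSL(2,p) \subset \PGL(2,q)$. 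Whether $H$ itself is $\PSL(2,p)$ or $\PGL(2,p)$ then reduces to the question of whether the generators lie in $\PSL$, i.e.\ whether the determinant $-1$ is a square in $\mathbb{F}_p^{*}$, which is exactly the congruence of $p$ modulo $4$.

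For part (2), Theorem~\ref{thm:main} reduces the hypertope assertion to verifying that $\{P,Q,R\}$ is non-collinear (clear, since the three tangent sides are distinct lines) and strongly non-self-polar. I would check the latter from the definition by inspecting the polar of each vertex: the polar of $P$ is the secant chord joining the two tangent points of the sides through $P$, so it meets the third side only at the tangent point of that side, which is neither $Q$ nor $R$; the analogous statements at $Q$ and $R$ dispose of the remaining cases. For the correlation group, the full $S_3$-symmetry of the configuration is visible: every permutation of $\{P,Q,R\}$ preserves the tangent-triangle structure and is realised by an element of the normaliser of $H$ in the correlation group of $\pi$, and these descend to correlations of $\Gamma$ permuting its three types, giving $S_3 \hookrightarrow \Cor(\Gamma)/\Aut(\Gamma)$; the reverse inclusion is automatic since the correlation group of a rank-$3$ hypertope acts on the type set through at most $S_3$.

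For part (3), when $q = q_0^3$ the Frobenius $\sigma : x \mapsto x^{q_0}$ of $\mathbb{F}_q / \mathbb{F}_{q_0}$ has order $3$ and extends to an order-$3$ collineation $\tau$ of $\pi = \PG(2,q)$ stabilising any $\mathbb{F}_{q_0}$-defined conic. Choosing $\OO$ to be such a conic (for example $Y^2 = XZ$), a counting argument produces a tangent point $T \in \OO$ whose $\tau$-orbit has length three; the tangent lines at $T$, $\tau(T)$, $\tau^2(T)$ then bound a tangent triangle whose vertices $P, Q, R$ form the required $\tau$-orbit. The subplane $\PG(2,p) \subset \pi$ associated with $H$ is the $\mathbb{F}_p$-rational subplane in these coordinates, since by part (1) $H$ has matrix entries in $\mathbb{F}_p$. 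Because $x^{q_0} = x$ for all $x \in \mathbb{F}_p$, the field-automorphism component of $\tau$ restricts to the identity there, so $\tau|_{\PG(2,p)}$ reduces to its underlying projectivity. The main obstacle across the whole proof is the strong non-self-polarity verification in (2): it requires carefully unpacking the definition and ruling out each forbidden incidence from the tangent-triangle geometry, whereas the generation argument in (1) and the Galois-restriction argument in (3) are direct once the coordinate setup and the choice of $\OO$ are fixed.
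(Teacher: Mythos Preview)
Your approach to part~(1) via explicit matrices and unipotent generation is correct and arguably more self-contained than the paper's, which instead invokes the maximal-subgroup classification (Theorem~\ref{thm:generationGeneral}) to rule out proper subgroups once containment in $\PGL(2,p)$ is established. Both routes begin with the same $3$-transitivity normalisation; yours trades the subgroup lattice for a direct computation.

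Part~(2) contains a genuine gap: what you verify is only that the triangle $\{P,Q,R\}$ itself is not self-polar (equivalently, that it is proper), not that it is \emph{strongly} non-self-polar. The definition requires ruling out every self-polar triangle $\{X,Y,Z\}$ with $X,Y,Z$ the centres of involutions lying in $H_2,H_0,H_1$ respectively, not just the particular triple $\{P,Q,R\}$; each $H_i$ here is dihedral of order $2p$ and contributes $p$ candidate involutions, so inspecting only the polars of the three vertices is far from sufficient. The paper's argument is the determinant trick of Proposition~\ref{prop:notPSL}: the hypothesis $H=\PGL(2,q)$ together with part~(1) forces $q=p\equiv 3\pmod 4$, so $\alpha_P,\alpha_Q,\alpha_R$ lie outside $\PSL(2,q)$; then every involution of each $H_i$ also lies outside $\PSL(2,q)$, and a product of two such cannot equal a third since $(-1)(-1)\neq -1$ in $\PGL(2,q)/\PSL(2,q)\cong C_2$.

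For part~(3) your idea is salvageable but skips the crucial step. After changing coordinates so that the tangent points become $0,1,\infty$, the collineation $\tau$ is no longer the bare Frobenius: it takes the form $M\circ\sigma$ for some projectivity $M\in\PGL(2,q)$, and your claim that $\tau|_{\PG(2,p)}$ ``reduces to its underlying projectivity'' presupposes that $M$ stabilises the $\mathbb{F}_p$-subplane, which you do not check. This can be repaired (e.g.\ by noting that $M$ must permute $\{0,1,\infty\}$ and hence already lies in $\PGL(2,p)$), but the paper takes a cleaner route: it observes that $\tau$ normalises $H$, produces via sharp $3$-transitivity the unique $g\in\PGL(2,p)$ realising the same cyclic permutation of the tangent points, and concludes that $\tau$ acts on $H$ as conjugation by $g$, hence on $\pi_p$ as a projectivity.
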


\section{Background}
In this section, we recall some of the necessary background about projectivities and collineations of a projective plane, the subgroup structure of $\PGL(2,q)$, incidence geometries and $C$-groups.

\subsection{Collineations and projectivities of the projective plane}\label{subsec:collineations}

Let $\pi = \PG(2,q)$, be a projective plane over the finite field $\mathbf{K} =\FF_q$, where $q = p^n$ for some prime $p$ and some integer $n$. A \textit{collineation} of $\pi$ is a bijective function $\varphi \colon \pi \to \pi$ of the points of $\pi$ that also preserves lines. A collineation of $\pi$ is a \textit{projectivity} if its action on $\pi$ can be induced from a linear map on the vector space $\mathbf{K}^3$. After choosing a projective basis for $\pi$, the group of projectivities of $\pi$ can be identified with $\PGL(3,q)$ and the group of all collineations of $\pi$ is then $\PGamma(3,q) = \langle \PGL(3,q), \varphi \rangle$, where $\varphi$ is the collineation of $\pi$ induced by the Frobenius automorphism in the chosen basis.

Let $\OO$ be a non-degenerate conic in $\pi$. The group of projectivities of $\pi$ sending $\OO$ to itself is a subgroup $G < \PGL(3,q)$ which, by definition, is isomorphic to $\PGO(3,q)$. This group $G$ can in fact also be identified with $\PGL(2,q)$.

\begin{lemma}\cite[Corollary 7.14]{hirschfeld1998projective}
    The group $G$ of projectivities fixing $\OO$ is isomorphic to $\PGL(2,q)$.    
\end{lemma}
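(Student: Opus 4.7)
The plan is to build an explicit isomorphism $\Phi \colon \PGL(2,q) \to G$ using the Veronese parametrization of $\OO$. After a change of coordinates we may assume $\OO$ is the conic $Y^2 = XZ$, parametrized by the bijection $v \colon \PG(1,q) \to \OO$, $[s:t] \mapsto [s^2 : st : t^2]$. A matrix $M = \bigl(\begin{smallmatrix} a & b \\ c & d \end{smallmatrix}\bigr) \in \GL(2,q)$ acts on homogeneous degree-two monomials in $s,t$ via the explicit $3 \times 3$ matrix
\[
\Phi(M) = \begin{pmatrix} a^2 & 2ab & b^2 \\ ac & ad+bc & bd \\ c^2 & 2cd & d^2 \end{pmatrix},
\]
whose determinant is $(ad-bc)^3$ and which, by direct substitution, scales the form $Y^2 - XZ$ by $(ad-bc)^2$. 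So $\Phi(M)$ lies in $G$, and since $\Phi$ is multiplicative and scalars go to scalars, it descends to a group homomorphism $\Phi \colon \PGL(2,q) \to G$.

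For injectivity, if $\Phi(M)$ is the identity of $\PG(2,q)$ then $M$ fixes $v^{-1}(P)$ for every $P \in \OO$; that is, $M$ fixes all $q+1 \ge 4$ points of $\PG(1,q)$, forcing $M$ to be trivial in $\PGL(2,q)$ by the fundamental theorem of projective geometry on the line. For surjectivity I would argue by counting. The action of $G$ on ordered triples of distinct points of $\OO$ is faithful: given $\tau \in G$ fixing $P_1,P_2,P_3 \in \OO$, the tangent line $\ell_i$ at $P_i$ is intrinsically determined by the pair $(\OO, P_i)$, hence is fixed by $\tau$; consequently the intersection $T_{12} = \ell_1 \cap \ell_2$ is a fourth fixed point. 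A short polarity argument (if $T_{12}$ lay on $P_1P_3$ then $P_1P_3$ would coincide with $\ell_1$, contradicting that $\ell_1$ meets $\OO$ only at $P_1$) shows that $\{P_1,P_2,P_3,T_{12}\}$ is a projective frame, so $\tau$ is the identity of $\PG(2,q)$. Therefore
\[
|G| \le (q+1)\,q\,(q-1) = |\PGL(2,q)|,
\]
and combined with injectivity of $\Phi$ this forces $\Phi$ to be an isomorphism.

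The main obstacle is the faithfulness step, i.e.\ verifying that a projectivity of $\PG(2,q)$ preserving $\OO$ and fixing three points of $\OO$ is the identity; everything else is a routine computation with the Veronese matrix or a counting argument. One could instead establish surjectivity directly by showing $G$ acts 3-transitively on $\OO$ (lifting the unique $M \in \PGL(2,q)$ matching prescribed triples in $\PG(1,q)$ under $v$), but this merely repackages the same geometric input about tangents and poles.
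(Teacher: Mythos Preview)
Your proof is correct. The paper does not actually prove this lemma in the published text (it simply cites Hirschfeld), but the commented-out sketch in the source uses precisely the same Veronese matrix
\[
M(a,b,c,d)=\begin{pmatrix} a^2 & 2ab & b^2\\ ac & ad+bc & bd\\ c^2 & 2cd & d^2\end{pmatrix}
\]
that you wrote down, applied to the conic $X_0X_2=X_1^2$, together with the observation $\det M(a,b,c,d)=(ad-bc)^3$. So for the embedding $\PGL(2,q)\hookrightarrow G$ your approach and the paper's are identical.

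Where you go further is surjectivity: the paper's sketch stops after exhibiting the embedding, while you supply the missing half via the faithfulness-on-ordered-triples argument (fixing $P_1,P_2,P_3\in\OO$ forces fixing the frame $\{P_1,P_2,P_3,\ell_1\cap\ell_2\}$) and the resulting order bound $|G|\le (q+1)q(q-1)$. That step is sound; the only thing worth making explicit is that $T_{12}=\ell_1\cap\ell_2\notin\OO$ (since distinct tangents meet off the conic), so $T_{12}$ is genuinely distinct from $P_1$ and $P_2$ and the line-through-two-points arguments go through. In short, your proof is the standard symmetric-square construction, matching the paper's sketch for the injection and completing what the paper left to the reference.
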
 \label{lem:stabConic} 
\begin{comment}
\begin{proof}
    We only sketch the proof that $\PGL(2,q)$ is a subgroup of $\PGO(3,q)$. For $a,b,c,d \in \mathbf{K}$, we define the matrix 
    $$ M(a,b,c,d) =
    \begin{pmatrix}
    a^2 & 2ab & b^2\\
    ac & ad+bc & bd \\
    c^2 &   2cd   &d^2 
     \end{pmatrix}
     $$
     Take $\OO$ to be defined by the equation, $X_0X_2 = X_1^2$. Then $\OO = \{ [1:t:t^2] \mid t\in K \} \cup \{[0:0:1]\}$. 
     We can then easily check that $M(a,b,c,d)$ fixes $\OO$.
     \begin{align*}
     M(a,b,c,d) [1:t:t^2] =& [(a+bt)^2: (a+bt)(c+dt):(c+dt)^2] \\
     =&[1,(c+dt)/(a+bt), (c+dt)^2/(a+bt)^2] \\
     M(a,b,c,d) [0:0:1] =& [b^2:bd;d^2] = [1:d/b:d^2/b^2]
     \end{align*}
     Moreover, $\det(M(a,b,c,d)) = (ad-bc)^3$.
\end{proof}
\end{comment}
Throughout this article, the plane $\pi$ and the conic $\OO$ will be fixed, and we will thus always see $\PGL(2,q)$ as the stabilizer in $\pi$ of $\OO$, which will always be denoted as $G$. We now recall a description of the involutions of $\PGL(2,q)$ in this representation.

A projectivity $\gamma$ of $\pi$ is called a \textit{perspectivity} if it sends each line going through a given point $P$ to itself. This point $P$ is then called the \textit{center} of $\gamma$. The perspectivity $\gamma$ must then also fix (point-wise) a line $l$, which is called the \textit{axis} of $\gamma$.

It is easy to show that if $\alpha$ is a perspectivity of $\pi$ fixing $\OO$, then $\alpha$ is an involution, its center is not on $\OO$ and its axis is determined by its center. Moreover, if  $\alpha$ and $\beta$ are two perspectivities fixing $\OO$, both having center $P$, they must in fact be equal.
In particular, all perspectivities of $\PGL(2,q) \subset \PGL(3,q)$ are all involutions. The converse in also true.

\begin{proposition}\cite[Corollary 5]{baer1946projectivities}
    Let $\alpha$ be an involution of a projective plane $\pi$ of order $q$, then either $q$ is a square and $\alpha$ is a Baer involution, or $\alpha$ is a perspectivity.
\end{proposition}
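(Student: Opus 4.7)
The plan is to analyze the fixed substructure $\pi_\alpha = (F,L)$ of $\alpha$, where $F$ is the set of fixed points and $L$ the set of fixed lines, and to show that it forces $\alpha$ to be either a perspectivity or a Baer involution. First I would record the basic closure properties: if $P,Q \in F$ are distinct fixed points then $\alpha(PQ) = \alpha(P)\alpha(Q) = PQ$, so $PQ \in L$, and dually. Moreover, if a line $l \notin L$ then $l \cap \alpha(l)$ is a single fixed point, since $\alpha(l \cap \alpha(l)) = \alpha(l) \cap l$. Therefore $\pi_\alpha$ is a closed configuration in $\pi$.

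Next I would invoke the standard classification of closed configurations of a projective plane: $\pi_\alpha$ is either (i) a degenerate configuration consisting of a line $l$ pointwise fixed together with possibly one point off $l$ all of whose incident lines are fixed, or (ii) a (nondegenerate) projective subplane $\pi_0$ of order $q_0 \leq q$. Case (i) is exactly the definition of a perspectivity. I would separately rule out the very small cases (a single isolated fixed point or an isolated fixed line) by arguing that an involution acting on the pencil of lines through an isolated fixed point must have at least one more fixed pencil-line, forcing the pointwise-fixed line case; the case with no fixed points at all is impossible because for any non-fixed line $l$ the point $l \cap \alpha(l)$ is fixed.

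The hard part is case (ii): I need to show $q = q_0^2$. I would use the following counting: every point of $\pi \smallsetminus \pi_0$ is not fixed, so it is matched with a distinct partner by $\alpha$, giving $(q^2+q+1 - (q_0^2+q_0+1))/2 = (q-q_0)(q+q_0+1)/2$ pairs. Each such pair $\{Q,\alpha(Q)\}$ lies on the fixed line $Q\alpha(Q)$, which necessarily belongs to $L$ and hence is a line of $\pi_0$. Conversely, a line of $\pi_0$ carries $q+1 - (q_0+1) = q - q_0$ non-fixed points of $\pi$, contributing $(q-q_0)/2$ pairs; summing over the $q_0^2+q_0+1$ lines of $\pi_0$ gives $(q_0^2+q_0+1)(q-q_0)/2$ pairs. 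Equating the two counts and cancelling $(q-q_0)/2$ (note $q \neq q_0$ since $\alpha \neq \mathrm{id}$) yields $q+q_0+1 = q_0^2 + q_0 + 1$, hence $q = q_0^2$, so $\pi_0$ is a Baer subplane and $\alpha$ is a Baer involution. The only subtle step I anticipate is justifying that every non-fixed line does pair up its non-fixed points cleanly with the involution on the line being well-behaved even though $\pi$ need not be Desarguesian; but because $\alpha$ restricts to a permutation of order $2$ of the $q+1$ points of any fixed line, the partition into fixed points and $2$-cycles is forced, and no further structure on $\pi$ is required.
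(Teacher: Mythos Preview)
The paper does not supply its own proof of this proposition: it is stated with a citation to Baer's 1946 paper and used as a black box, so there is nothing in the paper to compare your argument against.

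That said, your proposal is the classical argument and is essentially correct. The observation that for a non-fixed point $Q$ the line $Q\,\alpha(Q)$ is fixed, and that a non-fixed point lies on \emph{exactly one} fixed line (two fixed lines through $Q$ would force $Q$ to be their fixed intersection), is the key structural fact; it immediately gives the Baer dichotomy. Your double count in case~(ii) is clean and yields $q=q_0^2$ exactly as you wrote. One small imprecision: your description of the degenerate case~(i) as ``a line $l$ pointwise fixed together with possibly one point off $l$'' tacitly assumes the center is off the axis, whereas an elation has its center on the axis; and the ``standard classification of closed configurations'' you invoke actually contains several further degenerate types, so you should say explicitly why, for an \emph{involution}, the fact that every non-fixed point lies on a unique fixed line (and dually) forces the degenerate fixed structure to be of center--axis type rather than, say, a triangle or a partial pencil. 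This is routine but deserves a sentence.
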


Let $I = \{\gamma \in G \mid O(\gamma) = 2\}$ be the set of involutions of $G$. Since every involution of $G$ is a perspectivity, we can define a map $C \colon I \to \pi \setminus \OO$ by sending an involution $\alpha \in I$ to its center. This map is in fact injective, as no two involutions of $G$ can have the same center. A counting argument on the number of involutions of $G$ shows that it is also surjective. We can thus associate to any point $P \in \pi \setminus \OO$ a unique involution $\alpha_P \in G$ such that $C(\alpha_P) = P$. For any subgroup $H \subset G$, we will say that \textit{P belongs to $H$} if $\alpha_P \in H$. Note that there is a dual map $l \colon I \to \pi^* \setminus \OO^*$ where $\pi^*$ and $\OO^*$ are the duals of $\pi$ and $\OO$. This map $l$ associates to each involutions $\alpha$ its axis $l(\alpha)$.

A line $l$ is \textit{secant} (respectively \textit{tangent}, \textit{exterior}) to $\OO$ if it meets $\OO$ in $2$ (respectively, 1,0) points. 
A point $P \in \pi \setminus \OO$ is said to be \textit{exterior} to $\OO$ if there exists a tangent line $l$ to $\OO$ containing $P$. Else, $P$ is said to be \textit{interior} to $\OO$. 

Let $P$ be an exterior point. Then, the involution $\alpha_P$ has two fixed point on $\OO$. Indeed, $\alpha_P$ must fix the points on intersection of $\OO$ with the tangents to $\OO$ going through $P$. On the contrary, if $P$ in interior, $\alpha_P$ has no fixed points on $\OO$. The amount of fixed points in $\OO$ of an involution $\alpha$ determines whether $\alpha$ is in $\PSL(2,q) \subset \PGL(2,q)$ or not.

\begin{lemma}
    Let $H = \PSL(2,q)$. Then
    \begin{enumerate}
        \item If $q = 1 \pmod 4$, any involution of $H$ has $2$ fixed points on $\OO$,
        \item If $q = 3 \pmod 4$, any involution of $H$ has no fixed points on $\OO$.
    \end{enumerate}
\end{lemma}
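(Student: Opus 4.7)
The plan is to reduce the statement to a direct matrix computation using the identification of $G \cong \PGL(2,q)$ with the stabiliser of $\OO$ from Lemma \ref{lem:stabConic}. The key observation is that, under this identification, the action of $G$ on the $q+1$ points of $\OO$ is equivalent to the standard action of $\PGL(2,q)$ on $\PP^1(\FF_q)$, since $\OO$ is a rational curve and $G$ acts triply transitively on it. Under this identification, an involution $\alpha \in G$ has two fixed points on $\OO$ exactly when its two fixed points on $\PP^1(\overline{\FF_q})$ are both $\FF_q$-rational, and has no fixed points on $\OO$ when they form a conjugate pair in $\PP^1(\FF_{q^2}) \setminus \PP^1(\FF_q)$.

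Next, I would lift such an involution $\alpha$ to a matrix $A \in \GL(2,q)$. Since $\alpha^2 = \Id$ in $\PGL$, we have $A^2 = \lambda I$ for some $\lambda \in \FF_q^*$, and the square class of $\lambda$ in $\FF_q^*/(\FF_q^*)^2$ is invariant under the rescaling $A \mapsto cA$ (which sends $\lambda$ to $c^2\lambda$). The eigenvalues of $A$ are $\pm\sqrt{\lambda}$, so they lie in $\FF_q$ if and only if $\lambda$ is a square in $\FF_q^*$: this is exactly the case where $\alpha$ has two fixed points on $\OO$. In both cases, the determinant of $A$ is the product of its eigenvalues, hence $\det A = -\lambda$, and $\det A$ is also well-defined modulo squares.

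Finally, I would invoke the standard characterisation $\PSL(2,q) = \{\alpha \in \PGL(2,q) \mid \det A \in (\FF_q^*)^2 \text{ for some lift } A\}$, so that $\alpha \in H$ iff $-\lambda$ is a square in $\FF_q^*$. Splitting on the square class of $\lambda$ and using the classical fact that $-1$ is a square in $\FF_q^*$ precisely when $q \equiv 1 \pmod 4$, the two cases fall out: if $\lambda$ is a square, then $-\lambda$ is a square iff $q \equiv 1 \pmod 4$, giving case $(1)$; if $\lambda$ is a non-square, then $-\lambda$ is a square iff $q \equiv 3 \pmod 4$, giving case $(2)$. I do not anticipate any serious obstacle here: once the dictionary between fixed points on $\OO$ and eigenvalues of a matrix lift is in place, everything reduces to the behaviour of $-1$ modulo squares in $\FF_q^*$. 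The only delicate point is the compatibility of the determinant-square criterion for $\PSL(2,q)$ with the embedding of $G$ into $\PGL(3,q)$ coming from Lemma \ref{lem:stabConic}, but this is built into the standard isomorphism $\PGO(3,q) \cong \PGL(2,q)$.
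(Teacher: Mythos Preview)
Your argument is correct and complete. The paper itself states this lemma without proof, treating it as a standard fact from the theory of $\PSL(2,q)$, so there is no paper-proof to compare against. What you have written is essentially the classical argument: lift to $\GL(2,q)$, observe that any lift $A$ of a nontrivial involution satisfies $A^2=\lambda I$ with $\mathrm{tr}(A)=0$ and $\det A=-\lambda$, and then read off both the fixed-point behaviour (via rationality of $\pm\sqrt{\lambda}$) and membership in $\PSL(2,q)$ (via the square class of $\det A$) from the square class of $-1$. The compatibility issue you flag at the end is indeed harmless, since the isomorphism of Lemma~\ref{lem:stabConic} intertwines the action on $\OO$ with the natural action on $\PP^1(\FF_q)$.
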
\label{lem:fixedpointsonO}

We can thus see that if $q = 1 \pmod 4$, all the exterior points belong to $\PSL(2,q)$ while, if $q = 3 \pmod 4$, it is the interior points that belong to $\PSL(2,q)$.

The projective plane $\pi$ also admits another interesting type of symmetries, which are called \textit{polarities}. Polarities send points of $\pi$ to lines of $\pi$ and vice-versa, while preserving incidence, and have order $2$. In the language of incidence geometry, polarities are correlations of $\pi$ of order $2$ that are not automorphisms.

Polarities are closely related to conics (see \cite{coxeter2003projective}, for example, for more details). Let $\psi$ be a polarity and let $P$ and $l$ be a point and a line of $\pi$, respectively. The line $\psi(P)$ is called the \textit{polar} of $P$ and the point $\psi(l)$ is called the \textit{pole} of $l$. A point $P$ of $\pi$ is said to be \textit{self-polar} if it lies on its polar. It can be shown that the set of self-polar points of a polarity is always a conic. Conversely, given a conic $\OO$, we can construct a polarity. If $P \in \OO$, we set that the tangent to $\OO$ at $P$ is the polar of $P$. For any other point $P$, take $l_1$ and $l_2$ to be two secants to $\OO$ containing $P$. Let $P_1$ and $Q_1$ be the intersections of $\OO$ with $l_1$ and let $P_2$ and $Q_2$ be the intersection of $\OO$ with $l_2$. The polar of $P$ in then the line joining $Q_1Q_2 \cap P_1P_2$ and $Q_1P_2 \cap P_1Q_2$.

There is an interplay between geometry and algebra in the following sense. Let $\psi$ is the polarity associated to our choice $\OO$ of conic. The axis of the involution $\alpha_P \in G$ is none other that the polar $\psi(P)$ of $P$.

Finally, we will say that a triangle $\Delta \in \pi$ is \textit{self-polar} if each side of $\Delta$ is the polar of its opposite vertex.

\subsection{Subgroup structure of $\PGL(2,q)$ and $\PSL(2,q)$}

The subgroup structure of $\PSL(2,q)$ can be found in the book by Dickson \cite{Dickson1958LinearGW} (but was first studied by Moore \cite{moore1903subgroups} and Wiman \cite{wiman1899bestimmung}). As $\PGL(2,q)$ can always be embedded into $\PSL(2,q^2)$, the subgroup structure of $\PGL(2,q)$ can be deduced from the one of $\PSL(2,q)$.

We will only need to know the list of the maximal subgroups of $\PSL(2,q)$ and of $\PGL(2,q)$. While such a list can be deduced from \cite{Dickson1958LinearGW}, more compact statements can be found in \cite{giudici2007maximal}. We start with the statement for $\PGL(2,q)$.

\begin{theorem}\label{thm:MaxSubPGL}
    Let $G = \PGL(2,q)$ with $q = p^n > 3$ for some odd prime $p$. Then the maximal subgroups of $G$ not containing $\PSL(2,q) are:$
    \begin{enumerate}
        \item $C_p^n : C_{q-1}$,
        \item $D_{2(q-1)}$ for $q \neq 5$
        \item $D_{2(q+1)}$,
        \item $S_4$ for $q = p \equiv \pm 3 (\bmod\; 8)$,
        \item $\PGL(2,q_0)$ for $q = q_0^r$ with $r$ and odd prime.
    \end{enumerate}
\end{theorem}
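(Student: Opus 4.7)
The plan is to deduce the classification from Dickson's description of the subgroup structure of $\PSL(2,q)$, combined with the embedding $\PGL(2,q) \hookrightarrow \PSL(2,q^2)$ noted at the start of this subsection. If $M$ is a maximal subgroup of $\PGL(2,q)$ not containing $\PSL(2,q)$, then $M \cap \PSL(2,q)$ has index at most $2$ in $M$, so the isomorphism type of $M$ is forced to appear on Dickson's list of subgroups of $\PSL(2,q^2)$.

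Working through Dickson's list, one isolates exactly the five families of candidates in the statement. The Borel subgroup $C_p^n : C_{q-1}$ is the point-stabilizer in the natural action of $\PGL(2,q)$ on $\PP^1(\FF_q)$; the two dihedral groups $D_{2(q-1)}$ and $D_{2(q+1)}$ are the normalizers of the split and non-split maximal tori; the subfield group $\PGL(2, q_0)$ arises for every proper subfield $\FF_{q_0} \subset \FF_q$; and $S_4$ arises as a finite rotation-group type embedding whenever its generators lift to matrices over $\FF_q$.

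Once the candidates are listed, the bulk of the work is to verify maximality of each. The Borel subgroup is maximal because $\PGL(2,q)$ acts $2$-transitively on $\PP^1(\FF_q)$. Each dihedral normalizer $D_{2(q\pm 1)}$ is maximal because, by Dickson, any strict overgroup inside $\PGL(2,q)$ would have to belong to one of the other families, and a direct comparison of orders and subgroup lattices rules this out --- the sole exception is $q=5$, where $D_{2(q-1)} = D_8$ sits inside the exceptional $S_4 \subset \PGL(2,5)$. The exceptional $S_4$ survives in $\PGL(2,q) \setminus \PSL(2,q)$ precisely when $q=p$ and $p \equiv \pm 3 \pmod 8$: for $p\equiv\pm 1\pmod 8$ the relevant generators already lie in $\PSL(2,p)$, and for $q = p^n$ with $n \geq 2$ any such $S_4$ sits inside the proper overgroup $\PGL(2,p)$. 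For subfield groups, $\PGL(2, q_0)$ with $q = q_0^r$ fails maximality when $r$ has a proper prime factor $s$, because $\PGL(2, q_0) \subsetneq \PGL(2, q_0^{r/s})$; and it also fails when $r = 2$ because $\PGL(2, q_0) \subset \PSL(2, q_0^2) = \PSL(2, q)$. Hence only $r$ an odd prime remains.

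The main obstacle is the arithmetic case-work in item (4): identifying which copies of $S_4$ descend into $\PSL(2,q)$ versus survive in the outer coset amounts to deciding when certain matrix discriminants are squares in $\FF_q^\times$, and this is ultimately controlled by the congruence $p \equiv \pm 3 \pmod 8$. In practice this classical theorem is usually quoted directly from \cite{giudici2007maximal} or reconstructed from \cite{Dickson1958LinearGW}, as a self-contained proof is essentially a repackaging of Dickson's original analysis.
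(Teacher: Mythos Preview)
The paper does not give its own proof of this theorem: it is stated as a quoted result, with the remark just before it that the list ``can be deduced from \cite{Dickson1958LinearGW}'' and that ``more compact statements can be found in \cite{giudici2007maximal}''. Your proposal is precisely a sketch of that deduction --- using the embedding $\PGL(2,q)\hookrightarrow\PSL(2,q^2)$ the paper mentions, reading off candidates from Dickson's list, and then checking maximality case by case --- so it is entirely consistent with, and somewhat more detailed than, the paper's treatment.
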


Remark that under the representation of $G = \PGL(2,q)$ that we chose, the first three type of maximal subgroups correspond to stabilizers of tangent, exterior and secant lines, respectively. Moreover, the stabilizers of tangent lines are isomorphic to $\AGL(1,q)$, the group of affinities of an affine line with $q$ elements.

\begin{theorem}\label{thm:MaxSubPSL}
Let $q = p^n > 5$ with $p$ and odd prime. Then the maximal subgroups of $\PSL(2,q)$ are:
\begin{enumerate}
    \item $C_p^n : C_{q-1/2}$, the stabilizer of a point of a projective line,
    \item $D_{q-1}$ for $q \geq 13$
    \item $D_{q+1}$, for $q \neq 7,9$,
    \item $\PGL(2,q_0)$ for $q = q_0^2$,
    \item $\PSL(2,q_0)$ for $q = q_0^r$ where $r$ is an odd prime,
    \item $A_5$ for $q \equiv \pm1 (\bmod\; 10)$ where either $q = p$ or $q = p^2$ and $p \cong \pm 3 (\bmod\; 10)$,
    \item $A_4$ for $q = p \equiv \pm 3 (\bmod\; 8)$ and $q \not\equiv \pm 1 (\bmod\; 10)$,
    \item $S_4$ for $q = p \equiv \pm 1 (\bmod\; 8)$.
\end{enumerate}
\end{theorem}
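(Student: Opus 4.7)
The plan has three parts, one per claim.

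For (1), I would set up convenient coordinates on $\OO \cong \PG(1,q)$. Let $A,B,C \in \OO$ be the three points of tangency of the sides of the triangle (e.g.\ $PQ$ tangent at $A$). By the pole-polar correspondence of Section \ref{subsec:collineations}, the axis of $\alpha_P$ is the chord $AC$, so $\alpha_P$ is the unique involution of $\PGL(2,q)$ fixing $\{A,C\}$ on $\OO$, and similarly for $\alpha_Q,\alpha_R$. Choosing a projective coordinate on $\OO$ with $(A,B,C) \mapsto (0,1,\infty)$ puts all three pairs of fixed points inside $\PG(1,p)$, so the three involutions become Möbius maps with $\FF_p$-coefficients and $H\subseteq\PGL(2,p)\subseteq\PGL(2,q)$. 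To force equality with $\PGL(2,p)$ or $\PSL(2,p)$, I would rule out each maximal subgroup from Theorems \ref{thm:MaxSubPGL} and \ref{thm:MaxSubPSL}. The key observation is that $\alpha_P\alpha_R \colon z\mapsto z-2$ is parabolic of order $p$: this forces $|H|\geq 2p$ and kills the exceptional subgroups $S_4,A_4,A_5$ when $p$ is large enough, while subfield subgroups do not occur because $p$ is prime. The point-stabiliser and dihedral subgroups are excluded by the fact that $\{A,C\}\cap\{A,B\}\cap\{B,C\}=\emptyset$ (so $H$ fixes no point of $\OO$) and by checking that any $H$-stable pair $\{X,Y\}\subset\OO$ would be fixed by the parabolic $\alpha_P\alpha_Q$, whose only fixed point is $A$, producing a contradiction in each subcase. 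Whether $H\cong\PGL(2,p)$ or $\PSL(2,p)$ is then read off Lemma \ref{lem:fixedpointsonO}, since the centers $P,Q,R$ are exterior to $\OO$.

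For (2), Theorem \ref{thm:main} reduces the hypertope claim to the verification that a tangent triangle is strongly non-self-polar: the polar of each vertex $P$ is a chord of $\OO$, clearly distinct from the opposite side $QR\subset\pi\setminus\OO$, so ordinary non-self-polarity is immediate, and the stronger condition from Section \ref{sec:prelim} can be checked by direct inspection of this explicit configuration. For the quotient $\Cor(\Gamma)/\Aut(\Gamma)\subseteq S_3$, it suffices to realise every permutation of $\{\alpha_P,\alpha_Q,\alpha_R\}$ by an automorphism of $H$: the Möbius map $z\mapsto 1/(1-z)$ cyclically permutes $(0,1,\infty)$ and so cyclically conjugates $(\alpha_P,\alpha_Q,\alpha_R)$, while $z\mapsto 1-z$ swaps $A\leftrightarrow B$ and fixes $C$, producing a transposition. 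Both elements lie in $H$, so they yield inner automorphisms of $H$ permuting the $H_i$ nontrivially, giving the full $S_3$.

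For (3), I take $\tau$ to be the order-$3$ collineation of $\pi$ induced by the Frobenius $x\mapsto x^{q_0}$, which is not a projectivity. Among the $q+1$ tangent lines to $\OO$, only $q_0+1$ are defined over $\FF_{q_0}$, so I pick a tangent $l$ whose $\langle\tau\rangle$-orbit has size $3$. The three tangents $\{l,\tau l,\tau^2 l\}$ cannot be concurrent — a common point would be $\tau$-fixed and would lie on three tangents, impossible for an exterior point — so they form a genuine triangle, and setting $P := l\cap\tau^2 l$, $Q := \tau P$, $R := \tau^2 P$ realises its vertices as a single $\tau$-orbit. Conjugation by $\tau$ then permutes the generators $\alpha_P,\alpha_Q,\alpha_R$ and hence normalises $H$, which forces $\tau$ to stabilise the subplane $\PG(2,p)\subset\pi$ canonically associated to $H$. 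The restriction $\tau|_{\PG(2,p)}$ is thus a collineation of $\PG(2,p)$, but since $\FF_p$ has trivial Galois group $\Aut(\PG(2,p))=\PGL(3,p)$, so this restriction is automatically a projectivity.

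The main obstacle I anticipate is the case analysis in (1) for the smallest primes: for $p=3$, the group $\PGL(2,3)\cong S_4$ coincides with one of the exceptional maximal subgroups, so the elimination step must be adapted to recognise the ambient group, and a similar direct check is needed for $p=5$. A secondary subtlety, in (2), is to match the strong-non-self-polarity condition of Section \ref{sec:prelim} exactly to the tangent-triangle configuration rather than merely to assert failure of ordinary self-polarity.
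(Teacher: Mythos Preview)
Your proposal does not address the stated theorem at all. Theorem~\ref{thm:MaxSubPSL} is the classical Dickson classification of the maximal subgroups of $\PSL(2,q)$; the paper quotes it from the literature (with the compact statement taken from \cite{giudici2007maximal}) as background and offers no proof. What you have written is instead a proof plan for the tangent-triangle results of Section~5 (Theorem~\ref{thm:generationTangent} together with Corollaries~\ref{coro:tangent} and~\ref{coro:coliToproj}); indeed, in your part~(1) you explicitly invoke Theorems~\ref{thm:MaxSubPGL} and~\ref{thm:MaxSubPSL} as tools. There is therefore nothing to compare: the paper gives no argument for the statement in question, and your proposal is aimed at a different theorem.

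As a side remark on the content you did produce, your plan for the tangent-triangle theorem runs broadly parallel to the paper's own arguments (explicit coordinates on $\OO$ versus the paper's $3$-transitivity and subplane $\pi_p$; your observation that $\Aut(\FF_p)$ is trivial in part~(3) is if anything cleaner than the paper's matching of $\tau$ with a specific element of $\PGL(2,p)$). The one substantive gap is in part~(2): you assert strong non-self-polarity ``by direct inspection of this explicit configuration'', but the paper does not argue this way. It applies Proposition~\ref{prop:notPSL}, which requires $\alpha_P,\alpha_Q,\alpha_R\notin\PSL(2,q)$, i.e.\ $p\equiv 3\pmod 4$ by Lemma~\ref{lem:fixedpointsonO}; this is exactly why Corollary~\ref{coro:tangent} is stated only for that congruence class. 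Your unspecified inspection would have to be replaced by a genuine argument, and for $p\equiv 1\pmod 4$ the hypertope conclusion is not claimed in the paper.
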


The subgroups of type $\PGL(2,q_0)$ and $\PSL(2,q_0)$ for some $q_0$ diving $q$ will be called \textit{subfield subgroups} of $G$.

\subsection{Incidence and coset geometries}
To their core, most of the geometric objects of interest to mathematicians are composed of elements together with some relation between them. This very general notion is made precise by the notion of an incidence system, or an incidence geometry. For a more detailed introduction to incidence geometry, we refer to~\cite{buekenhout2013diagram}.

%\begin{definition}
    A triple $\Gamma = (X,*,t)$ is called an \textit{incidence system} over $I$ if
    \begin{enumerate}
        \item $X$ is a set whose elements are called the \textit{elements} of $\Gamma$,
        \item $*$ is a symmetric and reflexive relation (called the \textit{incidence relation}) on $X$, and
        \item $t$ is a map from $X$ to $I$, called the \textit{type map} of $\Gamma$, such that distinct elements $x,y \in X$ with $x * y$ satisfy $t(x) \neq t(y)$. 
    \end{enumerate}
%\end{definition}
Elements of $t^{-1}(i)$ are called the elements of type $i$.
The \textit{rank} of $\Gamma$ is the cardinality of the type set $I$.
A \textit{flag} in an incidence system $\Gamma$ over $I$ is a set of pairwise incident elements. The type of a flag $F$ is $t(F)$, that is the set of types of the elements of $F.$ A \textit{chamber} is a flag of type $I$. An incidence system $\Gamma$ is an \textit{incidence geometry} if all its maximal flags are chambers.

Let $F$ be a flag of $\Gamma$. An element $x\in X$ is {\em incident} to $F$ if $x*y$ for all $y\in F$. The \textit{residue} of $\Gamma$ with respect to $F$, denoted by $\Gamma_F$, is the incidence system formed by all the elements of $\Gamma$ incident to $F$ but not in $F$. The \textit{rank} of the residue $\Gamma_F$ is equal to rank$(\Gamma)$ - $|F|$.

The \textit{incidence graph} of $\Gamma$ is a graph with vertex set $X$ and where two elements $x$ and $y$ are connected by an edge if and only if $x * y$. Whenever we talk about the distance between two elements $x$ and $y$ of a geometry $\Gamma$, we mean the distance in the incidence graph of $\Gamma$ and simply denote it by $d_\Gamma(x,y)$, or even $d(x,y)$ if the context allows.

An incidence geometry $\Gamma$ is \textit{connected} if its incidence graph is connected. It is \textit{residually connected} if all its residue of rank at least two are connected. It is \textit{thin} if all its residue of rank two contain exactly two elements.

Let $\Gamma = \Gamma(X,*,t)$ be an incidence geometry over the type set $I$. A {\em correlation} of $\Gamma$ is a bijection $\phi$ of $X$ respecting the incidence relation $*$ and such that, for every $x,y \in X$, if $t(x) = t(y)$ then $t(\phi(x)) = t(\phi(y))$. If, moreover, $\phi$ fixes the types of every element (i.e $t(\phi(x)) = t(x)$ for all $x \in X$), then $\phi$ is said to be an {\em automorphism} of $\Gamma$. The \emph{type} of a correlation $\phi$ is the permutation it induces on the type set $I$. A correlation of type $(i,j)$ is called a duality if it has order $2$ and a correlation of type $(i,j,k)$ is called a triality if it has order $3$. The group of all correlations of $\Gamma$ is denoted by $\Cor(\Gamma)$ and the automorphism group of $\Gamma$ is denoted by $\Aut(\Gamma)$. Remark that $\Aut(\Gamma)$ is a normal subgroup of $\Cor(\Gamma)$ since it is the kernel of the action of $\Cor(\Gamma)$ on $I$.

If $\Aut(\Gamma)$ is transitive on the set of chambers of $\Gamma$ then we say that $\Gamma$ is {\em flag transitive}. If moreover, the stabilizer of a chamber in $\Aut(\Gamma)$ is reduced to the identity, we say that $\Gamma$ is {\em simply transitive} or {\em regular}.

Francis Buekenhout introduced in~\cite{buek} a new diagram associated to incidence geometries. His idea was to associate to each rank two residue a set of three integers giving information on its incidence graph.
Let $\Gamma$ be a rank $2$ geometry. We can consider $\Gamma$ to have type set $I = \{P,L\}$, standing for points and lines. The {\em point-diameter}, denoted by $d_P(\Gamma) = d_P$, is the largest integer $k$ such that there exists a point $p \in P$ and an element $x \in \Gamma$ such that $d(p,x) = k$. Similarly the {\em line-diameter}, denoted by $d_L(\Gamma) = d_L$, is the largest integer $k$ such that there exists a line $l \in L$ and an element $x \in \Gamma$ such that $d(l,x) = k$. Finally, the \textit{gonality} of $\Gamma$, denoted by $g(\Gamma) = g$ is half the length of the smallest circuit in the incidence graph of $\Gamma$.

If a rank $2$ geometry $\Gamma$ has $d_P = d_L = g = n$ for some natural number $n$, we say that it is a \textit{generalized $n$-gon}. Generalized $2$-gons are also called generalized digons. They are in some sense trivial geometries since all points are incident to all lines. Their incidence graphs are complete bipartite graphs. Generalized $3$-gons are projective planes.

Let $\Gamma$ be a geometry over $I$.  The \textit{Buekenhout diagram} (or diagram for short) $D$ for $\Gamma$ is a graph whose vertex set is $I$. Each edge $\{i,j\}$ is labeled with a collection $D_{ij}$ of rank $2$ geometries. We say that $\Gamma$ belongs to $D$ if every residue of rank $2$ of type $\{i,j\}$ of $\Gamma$ is one of those listed in $D_{ij}$ for every pair of $i \neq j \in I$. In most cases, we use conventions to turn a diagram $D$ into a labeled graph. The most common convention is to not draw an edge between two vertices $i$ and $j$ if all residues of type $\{i,j\}$ are generalized digons, and to label the edge $\{i,j\}$ by a natural integer $n$ if all residues of type $\{i,j\}$ are generalized $n$-gons. It is also common to omit the label when $n=3$.
If the edge $\{i,j\}$ is labeled by a triple $(d_{ij},g_{ij},d_{ji})$ it means that every residue of type $\{i,j\}$ had $d_P = d_{ij}, g = g_{ij}, d_L = d_{ji}$. We can also add information to the vertices of a diagram. 
We can label the vertex $i$ with the number $n_i$ of elements of type $i$ in $\Gamma$. Moreover, if for all flags $F$ of co-type $i$, we have that $|\Gamma_F| = s_i +1$, we will also label the vertex $i$ with the integer $s_i$.

The geometries of interest to us in this article are the so-called \textit{hypertopes}, introduced in \cite{HighlySymmetricHypertopes}. These are thin, residually connected and flag-transitive geometries. Contrary to abstract polytopes, hypertopes do not generally have linear diagrams.

Incidence geometries can be obtained from a group $G$ together with a set $(G_i)_{i \in I}$ of subgroups of $G$ as described in~\cite{Tits1957}. 
%\begin{definition}
    The \emph{coset geometry} $\Gamma(G,(G_i)_{i \in I})$ is the incidence geometry over the type set $I$ where:
    \begin{enumerate}
        \item The elements of type $i \in I$ are right cosets of the form $G_i \cdot g$, $g \in G$.
        \item The incidence relation is given by non empty intersection. More precisely, the element $G_i \cdot g$ is incident to the element $G_j \cdot k$ if and only if $i\neq j$ and $G_i \cdot g \cap G_j \cdot k \neq \emptyset$.
    \end{enumerate}
%\end{definition}

We can check flag transitivity and residual connectedness of a coset geometry $\Gamma(G,(G_i)_{i \in I})$ by using group theoretical conditions. For any $J \subset I$, we define $G_J = \cap_{j \in J} G_j$.

\begin{theorem} \cite[Theorem 1.8.10]{buekenhout2013diagram}\label{thm:FT}
    Let $\Gamma = \Gamma(G,(G_i)_{i \in I})$ be a coset incidence geometry. Then $G$ acts flag-transitively on $\Gamma$ if and only if for each $J \subset I$ and for each $i \in I\setminus J$, we have that $G_JG_i= \cap_{j\in J} (G_jG_i)$.
\end{theorem}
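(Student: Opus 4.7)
The plan is to prove the equivalence by induction on the rank $|J|$, using a translation between incidence of cosets and products of subgroups. I would fix the natural right action of $G$ on right cosets, $(G_i g) \cdot h = G_i (gh)$; under it, the element $h$ fixes $G_i = G_i \cdot 1$ iff $h \in G_i$, and hence $G_J = \bigcap_{j \in J} G_j$ fixes pointwise the standard flag $F_J = \{G_j : j \in J\}$ of type $J$.

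The key translation is the following: two cosets $G_i g$ and $G_j k$ intersect nonemptily iff $g k^{-1} \in G_i G_j$. Taking $k = 1$, the element $G_i g$ of type $i \notin J$ is incident to every element of $F_J$ iff $g \in \bigcap_{j \in J} G_i G_j$, while $F_J \cup \{G_i g\}$ actually possesses a common element iff $G_J \cap G_i g \neq \emptyset$, i.e. iff $g \in G_i G_J$. The stated condition $G_J G_i = \bigcap_{j \in J} G_j G_i$ is the mirror image of $G_i G_J = \bigcap_{j \in J} G_i G_j$ under inversion, so the two are equivalent.

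The forward direction is then immediate: if $G$ is flag transitive, every flag is the $G$-image of a standard flag and therefore has a common element; applied to extensions of $F_J$ this forces $\bigcap_{j \in J} G_i G_j \subseteq G_i G_J$, and the reverse inclusion is automatic. For the backward direction, I would induct on $|J|$. The base case $|J| = 1$ is the transitivity of $G$ on right cosets of any single $G_i$. For the inductive step, given a flag of type $J \cup \{i\}$, I use the inductive hypothesis to move its type-$J$ part to $F_J$, producing $F_J \cup \{G_i g\}$ with $g \in \bigcap_{j \in J} G_i G_j = G_i G_J$; writing $g = g_i y$ with $g_i \in G_i$ and $y \in G_J$ gives $G_i g = G_i y$, and acting on the right by $y^{-1} \in G_J$ fixes $F_J$ and sends $G_i y$ to $G_i$, yielding $G$-equivalence with the standard flag of type $J \cup \{i\}$.

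The main obstacle is bookkeeping: keeping the left/right conventions and the passage between $\bigcap_j G_i G_j$ and $\bigcap_j G_j G_i$ aligned so that the algebraic condition in the theorem really corresponds to the geometric one on common intersections. Once this translation is set up correctly, the induction itself reduces to the one-line factorization $g = g_i y$ and the observation that $G_J$ stabilizes the standard flag pointwise.
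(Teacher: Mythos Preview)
The paper does not give its own proof of this statement; it is quoted as \cite[Theorem 1.8.10]{buekenhout2013diagram} and used as a black box. There is therefore nothing in the paper to compare your argument against. Your proposal is a correct and standard proof of the cited result: the translation between incidence of cosets and membership in double products is set up correctly, the passage between $G_JG_i$ and $G_iG_J$ via inversion is fine, and the induction on $|J|$ goes through as you describe. The only implicit use worth flagging is in the forward direction, where you invoke that every flag is a $G$-image of a standard flag: this relies on every flag extending to a chamber, which is part of the hypothesis that $\Gamma$ is an incidence \emph{geometry} and not merely an incidence system.
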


For geometries of rank $3$, Theorem \ref{thm:FT}, reduces to check, for each $\{i,j,k\} = I$, that $(G_i \cap G_j) (G_i \cap G_k) =G_i \cap (G_jG_k)$. This was already known to Tits~\cite[Section 1.4]{Tits1974}. Once we know that $G$ acts flag-transitively on $\Gamma$, we can check residual connectedness using the following Theorem.

\begin{theorem} \cite[Corollary 1.8.13]{buekenhout2013diagram}\label{prop:RC}
    Suppose $I$ is finite and let $\Gamma = \Gamma(G,(G_i)_{i \in I})$ be a geometry on which $G$ acts flag-transitively. Then $\Gamma$ is residually connected if and only if $G_J = \langle G_{J \cup i} \mid i \in I\setminus J \rangle$ for each $J \subset I$ such that $|I\setminus J| \geq 2$.
\end{theorem}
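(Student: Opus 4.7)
The plan is to reduce residual connectedness of $\Gamma$ to the connectedness of its residues, each of which is itself a coset geometry, and then to invoke the standard fact that a coset geometry is connected if and only if its stabilizers generate the ambient group. Fix the base chamber $C_0 = \{G_i \cdot 1 \mid i \in I\}$. By flag-transitivity, every flag $F$ of type $J$ is $G$-equivalent to the sub-flag $F_J := \{G_j \cdot 1 \mid j \in J\}$ of $C_0$, and the action of $G$ restricts to an isomorphism $\Gamma_F \cong \Gamma_{F_J}$. Hence $\Gamma$ is residually connected if and only if $\Gamma_{F_J}$ is connected for every $J \subset I$ with $|I \setminus J| \geq 2$, the case $J = \emptyset$ (with the convention $G_\emptyset := G$) handling the connectedness of $\Gamma$ itself.

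Next I would identify $\Gamma_{F_J}$ with the coset geometry $\Gamma(G_J, (G_{J\cup\{i\}})_{i \in I \setminus J})$. An element $G_i g$ of type $i \in I \setminus J$ is incident to $F_J$ iff $G_i g \cap G_j \neq \emptyset$ for every $j \in J$, iff $g \in \bigcap_{j \in J} G_j G_i$. By the flag-transitivity criterion of Theorem \ref{thm:FT}, this intersection equals $G_J G_i$. Writing $g = h g'$ with $h \in G_J$ and $g' \in G_i$, we get $G_i g = G_i h$, so the elements of type $i$ in $\Gamma_{F_J}$ are in bijection with the right cosets of $G_{J \cup \{i\}} = G_i \cap G_J$ inside $G_J$. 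The incidence relation (nonempty intersection) transfers verbatim, establishing the claimed isomorphism.

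The remaining step is the elementary lemma that, for any group $H$ and any finite family $(H_i)_{i \in K}$ of subgroups, the coset geometry $\Gamma(H, (H_i)_{i \in K})$ is connected if and only if $H = \langle H_i \mid i \in K \rangle$. For the \emph{if} direction, given $x \in H$, write $x = x_1 x_2 \cdots x_n$ with each $x_m \in H_{i_m}$ and consecutive indices $i_m$ distinct (absorbing adjacent factors from the same subgroup otherwise). Then the sequence of cosets
\[
H_{i_1} \cdot 1 = H_{i_1} \cdot x_1,\ H_{i_2} \cdot x_1 = H_{i_2} \cdot x_1 x_2,\ \ldots,\ H_{i_n} \cdot x_1 \cdots x_{n-1} = H_{i_n} \cdot x
\]
is a walk of pairwise incident cosets, since consecutive cosets share the relevant partial product. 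This connects every coset to one of the form $H_k \cdot 1$, and any two such cosets share the element $1$, so the entire geometry is connected. For the converse, every edge of the incidence graph corresponds to multiplying the representative by an element of some $H_i$, so the connected component of $H_k \cdot 1$ consists only of cosets with representatives in $\langle H_i \mid i \in K \rangle$; connectedness forces this subgroup to be all of $H$.

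Combining the three steps yields the theorem. The argument is essentially bookkeeping and presents no serious obstacle; the only step requiring genuine input from the hypotheses is the collapse $\bigcap_{j \in J} G_j G_i = G_J G_i$ in the second paragraph, which is precisely where flag-transitivity (via Theorem \ref{thm:FT}) enters. Without this identity, the residue $\Gamma_{F_J}$ would not be expressible as a coset geometry over the single group $G_J$, and the clean generation criterion $G_J = \langle G_{J\cup\{i\}} \mid i \in I \setminus J \rangle$ would not arise in such a tidy form.
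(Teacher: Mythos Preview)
The paper does not prove this statement at all: it is quoted as background from Buekenhout--Cohen \cite[Corollary 1.8.13]{buekenhout2013diagram}, with no argument supplied. So there is no ``paper's own proof'' to compare against; your write-up is essentially a reconstruction of the standard proof found in that reference.

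Your argument is correct and follows the expected route: (i) reduce by flag-transitivity to residues of the base chamber, (ii) identify each such residue with the coset geometry $\Gamma(G_J,(G_{J\cup\{i\}})_{i\in I\setminus J})$, and (iii) apply the elementary connectedness criterion for coset geometries. The one place you are a bit quick is the phrase ``the incidence relation transfers verbatim'': the nontrivial direction is that $G_i h \cap G_k h' \neq \emptyset$ in $G$ (with $h,h'\in G_J$) forces $G_{J\cup\{i\}} h \cap G_{J\cup\{k\}} h' \neq \emptyset$ in $G_J$. The cleanest way to see this is to note that $G_J$ acts flag-transitively on $\Gamma_{F_J}$ (any two chambers of the residue extend to chambers of $\Gamma$ containing $F_J$, and an element of $G$ carrying one to the other must lie in $G_J$), after which the identification with the coset geometry over $G_J$ is immediate. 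This is a minor gap in exposition, not in substance.
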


\subsection{C-groups}

A \textit{C-group of rank $r$} is a pair $(G,S)$ where $G$ is a group and $S = \{\alpha_0,\alpha_1,\cdots, \alpha_{r-1} \}$ is a set of $r$ involutions such that $G= \langle \alpha_0,\alpha_1,\cdots, \alpha_{r-1}\rangle$ and such that for all subsets $J,K \subset I = \{0,1\cdots,r-1\}$ we have

\begin{equation*}
    \langle \alpha_j \mid j \in J \rangle \cap \langle \alpha_k \mid k \in K \rangle = \langle \alpha_l \mid l \in J \cap K \rangle.
\end{equation*}
This second property is called the \textit{intersection property}. We can immediately remark that it is very similar to the criterion for residual connectedness of Theorem \ref{prop:RC}. 

In fact, automorphism groups of regular hypertopes are always $C-$groups. The converse is not true (see \cite[Example 4.4]{HighlySymmetricHypertopes}, for example). 

Here is how $C-$groups will come into play in this article.
Let $S =\{\alpha_0,\alpha_1,\cdots, \alpha_{r-1} \}$ be a set of involutions generating a group $G$ and let, for each $i\in I$, $G_i$ be the subgroup of $G$ generated by all the involutions of $S$ except from $\alpha_i$. We can then consider the coset geometry $\Gamma = \Gamma (G,(G_i)_{i \in I})$. Then $\Gamma$ is a regular hypertope if and only if $(G,S)$ is a $C$-group and the condition of Theorem \ref{thm:FT} holds.
The diagram $D$ of $\Gamma = \Gamma (G,(G_i)_{i \in I})$ can be computed easily. Each vertex of $D$ correspond to one of the involutions of $S$ and the edge $\{i,j\}$ is labeled with the order of $\alpha_i\alpha_j$. If the order is $2$, the edge is omitting entirely.

\section{Preliminaries}\label{sec:prelim}

In this section, we set the notations that we will use in the rest of the parper, and we show some preliminary results.

Let $\pi = \PG(2,q)$ be a projective plane of order $q$ where $q = p^n$ for some odd prime $p$ and some integer $n$. Let $\OO$ be a non-degenerate conic in $\pi$. The conic $\OO$ can be considered as the set of self-polar points of a polarity $\psi$. Let $G$ be the stabilizer in $\PGL(3,q)$ of $\OO$. Then $G$ is isomorphic to $\PGL(2,q)$, as shown in Proposition \ref{lem:stabConic}. Let $\alpha \in G$ be an involution, which is then a perspectivity with a center and an axis that we denote by $C(\alpha)$ and $l(\alpha)$, respectively. Remark that $l(\alpha)$ is the polar of $C(\alpha)$ and that $C(\alpha)$ is the pole of $l(\alpha)$. For any $P \in \pi \setminus \OO$, the unique involution of $\PGL(2,q)$ with center $P$ will be denoted by $\alpha_P$ (see section \ref{subsec:collineations} for more explanations).

In the whole article, points will be denoted by upper case letters ($P,Q,R$ ...), lines by lower case letter ($l,m,k$, ...), and the line connecting two points $P$ and $Q$ will be denoted by $(PQ)$. Also, for an element $g \in G$, we will denote the order of $g$ by $O(g)$.

We want to construct thin, residually connected and flag-transitive geometries (i.e: regular hypertopes) of rank $3$ from triples of involutions of $G$. More precisely, we will choose three involutions $\alpha_P,\alpha_Q$ and $\alpha_R$ and construct the coset geometry $\Gamma(H,(H_0,H_1,H_2))$ where $H = \langle \alpha_P,\alpha_Q, \alpha_R \rangle$ and $H_0 = \langle \alpha_Q,\alpha_R \rangle, H_1 = \langle \alpha_P,\alpha_R \rangle, H_2 = \langle \alpha_P,\alpha_Q \rangle$. We will give geometric conditions on the configuration of the three centers $P,Q,R$ that will guarantee that $\Gamma$ is indeed an hypertope.

Let $\Delta =\{\alpha_P,\alpha_Q,\alpha_R \}$ be a triple of involutions in $G$. We say that $\Delta$ is a \textit{triangle of involutions} (or just a triangle) if the centers $P.Q$ and $R$ of the three involutions do not lie on a line of $\pi$.
The triangle $\Delta$ is \textit{proper} if the polar of each of the three points $P,Q$ and $R$ is never the line between the other two. 
We say that $\Delta$ is \textit{self-polar} if the axis of each involution of $\Delta$ is the line joining the center of the two other involutions. Therefore, a self-polar triangle is never proper, but a triangle which is not proper does not have to be self-polar.
The \textit{sides} of $\Delta$ are the three sets $(\alpha_P,\alpha_Q) = \{C(g) \mid g \in \langle \alpha_P,\alpha_Q \rangle \},(\alpha_Q,\alpha_R) = \{C(g) \mid g \in \langle \alpha_Q,\alpha_R \rangle \}$ and $(\alpha_P,\alpha_R) = \{C(g) \mid g \in \langle \alpha_P,\alpha_R \rangle \}$. We say that a point $X$ is in $\Delta$, and denote it simply by $X \in \Delta$, if $X$ is a point of one of the sides of $\Delta$. Finally, we say that $\Delta$ is \textit{strongly non self-polar} if there is no self-polar triangle $\{X,Y,Z\}$ such that its vertices belong to different sides of $\Delta$. In particular, a strongly non self-polar triangle cannot be self-polar. Note that $\Delta =\{\alpha_P,\alpha_Q,\alpha_R \}$ is self-polar if and only if $H = \langle \alpha_P,\alpha_Q,\alpha_R \rangle$ is of order $4$ and the three involutions all commute with each other.

If we choose three involutions $\alpha_P,\alpha_Q$ and $\alpha_R$ such that $P,Q,R$ are on a line $l$, the group $H = \langle \alpha_P,\alpha_Q,\alpha_R \rangle$ must be contained in the stabilizer of $l$. The same is true for triangles of involutions that are not proper.

\begin{lemma}\label{lem:insidestab}
    Let $\Delta = \{ \alpha_P,\alpha_Q , \alpha_R\}$ is a triangle which is not proper. Then $H = \langle \alpha_P,\alpha_Q , \alpha_R \rangle $ is inside the stabilizer of one of the lines $(PQ),(PR)$ or $(QR)$. Moreover, $H$ is isomorphic to $C_2 \times D_n$ or to $D_n$, for some integer $n$.
\end{lemma}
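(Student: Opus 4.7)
The plan is to use the improperness hypothesis to force $\alpha_P$ (up to relabelling) to commute with both $\alpha_Q$ and $\alpha_R$, and then to read off the stabilizer statement from the fact that $(QR)$ is an invariant line for all three generators.

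\medskip

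Since $\Delta$ is not proper, some vertex has its polar equal to the line joining the other two; after relabelling $\{P,Q,R\}$ I may assume $\psi(P)=(QR)$, i.e.\ the axis $l(\alpha_P)$ is $(QR)$. First I would check that all three generators fix the line $(QR)$: $\alpha_P$ fixes $(QR)$ pointwise, being its axis, while $\alpha_Q$ fixes every line through its center $Q$ and $\alpha_R$ fixes every line through $R$, so both fix $(QR)$. Hence $H\leq \mathrm{stab}_G((QR))$.

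\medskip

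The key step is to show $\alpha_P$ is central in $H$. Since $Q$ and $R$ lie on the axis of $\alpha_P$, they are fixed by $\alpha_P$. Thus $\alpha_P\alpha_Q\alpha_P^{-1}$ is an involution of $G$ with center $\alpha_P(Q)=Q$; because the map $C\colon I\to \pi\setminus\OO$ is a bijection, this conjugate equals $\alpha_Q$. The same argument applied to $R$ shows $\alpha_P$ commutes with $\alpha_R$, and so $\alpha_P$ is central in $H$.

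\medskip

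To identify the isomorphism type, I set $K=\langle \alpha_Q,\alpha_R\rangle$, which is dihedral of order $2n$ with $n=O(\alpha_Q\alpha_R)$ because it is generated by two involutions. Since $\alpha_P$ is a central involution and $H=\langle\alpha_P\rangle\cdot K$, there is a clean dichotomy: either $\alpha_P\in K$, in which case $H=K\cong D_n$, or $\alpha_P\notin K$, in which case the central subgroup $\langle\alpha_P\rangle$ intersects $K$ trivially and $H=\langle\alpha_P\rangle\times K\cong C_2\times D_n$. This gives the two possibilities stated in the lemma.

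\medskip

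The only mild subtlety, rather than a real obstacle, is keeping track that the relabelling is legitimate and that the three involutions are genuinely distinct, which is guaranteed by the triangle assumption (the centers $P,Q,R$ are three distinct points, and an involution in $G$ is determined by its center). Note also that the argument never needed to inspect whether $(QR)$ is secant or exterior to $\OO$: the structural result drops out purely from the centrality of $\alpha_P$ combined with the dihedral nature of $K$.
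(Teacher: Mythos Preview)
Your proof is correct and follows essentially the same approach as the paper: relabel so that the axis of $\alpha_P$ is $(QR)$, observe all three generators stabilise $(QR)$, establish that $\alpha_P$ commutes with $\alpha_Q$ and $\alpha_R$, and then split into the two cases $\alpha_P\in K$ or $\alpha_P\notin K$ with $K=\langle\alpha_Q,\alpha_R\rangle$. The only cosmetic difference is that the paper justifies the centrality of $\alpha_P$ by invoking that it is the central involution of the (dihedral) stabiliser of $(QR)$, whereas you argue directly via conjugation and the injectivity of the center map $C$; both reach the same conclusion.
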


\begin{proof}
    Suppose that the axis of $\alpha_P$ is the line $(QR)$. Then, clearly, all three of $\alpha_P,\alpha_Q$ and $\alpha_R$ stabilize $(QR)$. Moreover, $\alpha_P$ commutes with $\alpha_Q$ and $\alpha_R$ since $\alpha_P$ is the central involution in the stabilizer of $(QR)$. Hence, $H = \langle \alpha_P \rangle \times \langle \alpha_Q,\alpha_R \rangle$ if $\alpha_P$ is not in $\langle \alpha_Q,\alpha_R \rangle$ and $H = \langle \alpha_Q,\alpha_R \rangle$ otherwise.
\end{proof}
\begin{comment}

\begin{lemma}\label{lem:strongly non self-polar}
    Let $\Delta =\{\alpha_P,\alpha_Q,\alpha_R \}$ be a strongly non self-polar triangle and suppose that the axis of $\alpha_P$ is the line $(QR)$. Then the involutions $\alpha_{(PQ)}$ with axis $(PQ)$ and $\alpha_{(PR)}$ with axis $(PR)$ are not in $\langle \alpha_Q,\alpha_R \rangle$.
\end{lemma}

\begin{proof}
    Let $l = l(\alpha_Q)$ be the axis of $\alpha_Q$. Since $Q$ is in the polar of $P$, it means that $l$ contains $P$. Let $X$ be the intersection of $l$ with $(QR)$. By construction, the triangle $\{P,Q,X\}$ is self-polar. If $\alpha_{(PQ)}$ were to be in $\langle \alpha_Q,\alpha_R \rangle$, then its center $X$ would belong the side $(\alpha_Q,\alpha_R)$ of $\Delta$, which would then contradict the strongly non self-polarity. Doing the same construction but starting with $l = l(\alpha_R)$ proves the other case.
\end{proof}
\end{comment}

We now establish precise results on the intersections of stabilizers of lines.

\begin{proposition}\label{prop:stabilizerIntersection} 
     
     Let $l_1$ and $l_2$ be two distinct lines of $\pi$ and let $G_1$ and $G_2$ be their respective stabilizers in $G$. Let $P$ be the intersection of $l_1$ and $l_2$.
     \begin{enumerate}
         \item If $l_1$ and $l_2$ are both not tangent to $\OO$, then either $G_1 \cap G_2 = \langle \alpha_P \rangle$ or $G_1 \cap G_2$ is a dihedral group of order $4$ containing $\alpha_P$. In the latter case, $G_1 \cap G_2 = \{e,\alpha_P,\alpha_Q,\alpha_R\}$ where $Q = l_1 \cap l(\alpha_P)$ and $R = l_2 \cap l(\alpha_P)$.
         \item If $l_1$ is tangent and $l_2$ is not, then $G_1 \cap G_2$ is either a cyclic group of order $2$ if $P \notin \OO$ or a cyclic group of order $q-1$ if $P \in \OO$.
         \item If both $l_1$ and $l_2$ are tangents, then $G_1 \cap G_2$ is a cyclic group of order $q-1$.
     \end{enumerate}

\end{proposition}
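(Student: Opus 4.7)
The plan is to analyse each case by reducing to the stabiliser of $P = l_1 \cap l_2$ and then exploiting the polarity $\psi$ associated to $\OO$. In all three cases, any $g \in G_1 \cap G_2$ fixes $l_1$ and $l_2$ setwise and therefore fixes $P$, so $G_1 \cap G_2 \subseteq \operatorname{Stab}_G(P)$. The main tool is the identification $\operatorname{Stab}_G(P) = \operatorname{Stab}_G(\psi(P))$ coming from the polarity, together with the elementary lemma that $\alpha_X$ and $\alpha_Y$ commute if and only if $X = Y$ or $Y \in \psi(X)$, in which case (for $X \neq Y$) one has $\alpha_X\alpha_Y = \alpha_Z$ with $Z = \psi(X) \cap \psi(Y)$, the third vertex of the self-polar triangle spanned by $X$ and $Y$.

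For case (1), first dispose of the boundary configuration $P \in \OO$: both $l_i$ must then be secants through $P$, and sharp $3$-transitivity of $G$ on $\OO$ forces $G_1 \cap G_2 = \{e\}$. Assume then $P \notin \OO$, so that $\psi(P)$ is non-tangent and $\operatorname{Stab}_G(P) = \operatorname{Stab}_G(\psi(P))$ is dihedral with centre $\langle \alpha_P \rangle$. Because $\alpha_P$ fixes every line through $P$, it lies in $G_1 \cap G_2$. The quotient $\operatorname{Stab}_G(P)/\langle \alpha_P \rangle$ acts faithfully on $\psi(P) \cong \PG(1,q)$, and the lines $l_i$ through $P$ correspond bijectively to the points $Q_i = l_i \cap \psi(P)$ (well-defined and distinct since $P \notin \psi(P)$). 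A standard dihedral argument shows that the stabiliser of $Q_i$ in this quotient has order exactly $2$ (note $Q_i \notin \OO$ because $l_i$ is non-tangent). Lifting, I would verify
\begin{equation*}
  \operatorname{Stab}_G(P) \cap G_i \;=\; \{\, e,\; \alpha_P,\; \alpha_{Q_i},\; \alpha_{Q_i^*}\,\},
\end{equation*}
where $Q_i^*$ is the pole of $l_i$; here both $\alpha_{Q_i}$ and $\alpha_{Q_i^*}$ belong to $\operatorname{Stab}_G(P) \cap G_i$ by a direct check using polarity, and their product is $\alpha_P$. Intersecting over $i = 1,2$, the elements $e$ and $\alpha_P$ always match, while $\alpha_{Q_1} \in \{e,\alpha_P,\alpha_{Q_2},\alpha_{Q_2^*}\}$ forces $Q_1 = Q_2^*$, which by polarity is the same as $Q_2 = Q_1^*$. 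This yields the dichotomy of the statement: either $G_1 \cap G_2 = \langle \alpha_P \rangle$, or it is the Klein four-group $\{e,\alpha_P,\alpha_Q,\alpha_R\}$ with $Q = l_1 \cap \psi(P)$ and $R = l_2 \cap \psi(P)$.

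For case (2), write $T = l_1 \cap \OO$, so $G_1 = \operatorname{Stab}_G(T) \cong \AGL(1,q)$. If $P \in \OO$ then $P = T$ and $l_2$ is a secant meeting $\OO$ at a second point $T'$; any element of $G_1 \cap G_2$ fixes both $T$ and $T'$, giving the split torus of order $q-1$. If instead $P \notin \OO$, then $\alpha_P \in G_1 \cap G_2$ and any further element of $G_1$ fixing $P$ must also stabilise $l_2 \cap \OO$ (or its $\FF_{q^2}$-conjugate pair when $l_2$ is exterior); a direct computation inside $\AGL(1,q) = \{x \mapsto ax+b\}$ leaves only $e$ and $\alpha_P$, so $G_1 \cap G_2 = \langle \alpha_P \rangle$. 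Case (3) is immediate: $l_1 \neq l_2$ forces distinct tangent points $T_1 \neq T_2$, and $G_1 \cap G_2 = \operatorname{Stab}_G(T_1) \cap \operatorname{Stab}_G(T_2)$ is the pointwise stabiliser of two distinct points of $\OO \cong \PG(1,q)$, which is the split torus $C_{q-1}$.

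The main obstacle is case (1): producing the four candidate elements of $\operatorname{Stab}_G(P) \cap G_i$ explicitly, verifying that no further elements appear, and pinning down the precise geometric condition under which the two four-element sets coincide rather than sharing only $\{e,\alpha_P\}$, namely that $l_1$ contains the pole of $l_2$ (equivalently, that $l_1$ and $l_2$ are conjugate with respect to $\psi$). The commutation and product identities for the $\alpha_X$ collected in the second paragraph do the bulk of this bookkeeping.
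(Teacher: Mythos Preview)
Your proof is correct and in cases (2) and (3) essentially coincides with the paper's argument.  In case (1), however, you take a genuinely different route.  The paper works directly inside $G_1$: it enumerates the possible involutions in $G_1\cap G_2$ (at most three: $\alpha_P$, the involution with axis $l_1$, and the involution with axis $l_2$) and then rules out non-involutions by writing a generator $h$ of the cyclic part of $G_1$ as $\alpha_P\alpha_X$ for some $X\in l_1$ not belonging to $H$.  You instead work inside $\operatorname{Stab}_G(P)=\operatorname{Stab}_G(\psi(P))$, using the polarity to produce each $\operatorname{Stab}_G(P)\cap G_i$ as the explicit Klein four-group $\{e,\alpha_P,\alpha_{Q_i},\alpha_{Q_i^*}\}$ and then intersecting the two four-groups.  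This buys you two things: the argument that no non-involutions appear becomes a clean statement about point-stabilisers in a dihedral action rather than the paper's somewhat ad hoc decomposition of the cyclic generator, and the precise geometric criterion for the Klein-four outcome (that $l_1$ and $l_2$ are a conjugate pair with respect to $\psi$, i.e.\ each contains the pole of the other) falls out automatically from comparing the two four-element sets.  You also explicitly dispose of the degenerate sub-case $P\in\OO$ in (1), which the paper's formulation tacitly excludes since it refers to $\alpha_P$.

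One small remark: your justification ``$Q_i\notin\OO$ because $l_i$ is non-tangent'' is correct but deserves a line of explanation (if $P$ is exterior the two points of $\psi(P)\cap\OO$ are exactly the tangent points from $P$, so a non-tangent $l_i$ through $P$ meets $\psi(P)$ off $\OO$; if $P$ is interior then $\psi(P)\cap\OO=\emptyset$).  Similarly, in case (2) with $P\notin\OO$, your ``direct computation inside $\AGL(1,q)$'' amounts to observing that the stabiliser of $P$ in $G_1$ is cyclic of order $q-1$, and that its non-trivial elements other than $\alpha_P$ fix only the two tangents through $P$ among all lines through $P$; since $l_2$ is non-tangent this leaves only $\{e,\alpha_P\}$.
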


\begin{proof}
Let $H = G_1 \cap G_2$. 

(1) We investigate the involutions in $H$. The involutions in $G_1$ correspond to the perspectivities of $G$ with center on $l_1$, together with the unique perspectivity with axis $l_1$. Clearly, the perspectivity $\alpha_P$ with center $P = l_1 \cap l_2$ also fixes $l_2$ and thus is in $H$. Suppose there is another perspectivity $\alpha_Q \in H$. Then either $Q \in l_2$, and then the axis $l(\alpha_Q)$ must be $l_1$ (since $\alpha_Q \in G_1$), or the axis of $\alpha_Q$ is $l_2$ and $Q \in l_1$. Since there is a unique involution in $G$ having a given line $l$ as an axis, we have shown that $H$ contains at most three involutions and at least one.

Let $h \in G_1$ be a generator for the maximal cyclic subgroup of $G_1$ (recall that the $G_i$ are dihedral groups). Then $h$ can be written as $\alpha_P \alpha_X$ for at least two choices of point $X \in l_1$.
Therefore, we can always chose $\alpha_X$ not in $H$, since at most two involutions of $H$ have center on $l_1$. This shows that $h$ cannot be in $H$ and thus that $H$ contains only involutions. 

In conclusion, we see that $H$ is indeed either $\langle \alpha_P \rangle$ or an elementary abelian group of order $4$ containing three involutions $\alpha_P, \alpha_Q$ and $\alpha_R$ that pairwise all commute. Therefore, the axis of any of these involutions must contain the center of the two others.

(2) Suppose first that $P$ is not on $\OO$. Then the involution $\alpha_P$ is in $H$. There can be no other involution in $H$. Indeed, if the involution with axis $l_2$ were in $H$, the same argument as in (1) would mean that there must be a third involution in $H$. The only candidate is the involution with axis $l_1$, but since $l_1$ is tangent, that is not possible. By point (1), the only elements of $G_2$ that can fix $P$ are involutions, so clearly there can be only involutions in $H$. 

If instead $P \in \OO$, then the only involution in $H$ is the one with axis $l_2$. Indeed, its center must be on $l_1$ since $l_1$ is the polar of $P$. All other involutions of $G_2$ exchange $P$ with the other point of intersection of $l_2$ with $\OO$. Hence, any product of two such involutions fixes $P$, and is thus in $G_1$. The group $H$ is thus equal to the unique cyclic subgroup of order $q-1$ of $G_2$.

(3) In this case, both $G_1$ and $G_2$ are isomorphic to $\AGL(1,q)$, the group of affinities of an affine line with $q$ elements. Their intersection is then isomorphic to the group of linear maps of $\FF_q$, which is cyclic of order $q-1$, as it is the multiplicative group $\FF^*_q$.

\end{proof}

\section{Hypertopes of rank $3$ from triples of involutions in $\PGL(2,q)$}

In this section, we classify all hypertopes of rank $3$ whose automorphism group is generated by three involutions of $\PGL(2,q)$. We start by two lemmas, that take care of the cases in which the $3$ involutions are all contained in the stabilizer of some line $l$. The first one handles the case where $l$ is exterior or secant to $\OO$, meaning that its stabilizer is a dihedral group.

\begin{lemma}\label{lem:HypertopeDihedral}
    Let $H$ be a dihedral group of order $n$. Then, there exists a unique, up to isomorphism, regular hypertope of rank $3$ with automorphism group $H$.
\end{lemma}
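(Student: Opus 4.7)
Write $H = \langle r, s \mid r^m = s^2 = (rs)^2 = 1 \rangle$ with $n = 2m$. My plan is to classify, up to the action of $\Aut(H)$, the triples $\{\alpha_0, \alpha_1, \alpha_2\}$ of involution-generators of $H$ that yield a regular rank-$3$ hypertope, and to show that such triples form a single $\Aut(H)$-orbit; this gives uniqueness of $\Gamma$ up to isomorphism. Recall that the involutions of $H$ are the $m$ reflections $r^i s$ together with, when $m$ is even, the central involution $z = r^{m/2}$.

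The delicate first step is to show that some pair $\alpha_i, \alpha_j$ must commute. Assume for contradiction none do: then none of the $\alpha_i$ can be central, so all three are reflections, say $\alpha_i = r^{a_i} s$. Setting $d_{ij} = \gcd(m, a_i - a_j)$, the intersection property forces $d_{ij} > 1$ for every pair (else $\langle \alpha_i, \alpha_j \rangle = H$ already contains $\alpha_k$), while the generation of $H$ forces $d_{01}, d_{02}, d_{12}$ to be pairwise coprime. Since pairwise coprime divisors of $m$ have product also dividing $m$, we obtain
\[
|H_j H_k| \;=\; \frac{|H_j|\,|H_k|}{|H_j \cap H_k|} \;=\; \frac{2m^2}{d_{ik}\,d_{ij}} \;\geq\; 2m \;=\; |H|,
\]
so $H_j H_k = H$ for every pair $(j,k)$. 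Then $H_i \cap (H_j H_k) = H_i$, while $(H_i \cap H_j)(H_i \cap H_k) = \langle \alpha_k \rangle \langle \alpha_j \rangle$ has only four elements, contradicting the flag-transitivity condition of Theorem~\ref{thm:FT}.

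So some pair, say $\alpha_0, \alpha_1$, commutes, and $\langle \alpha_0, \alpha_1 \rangle$ is a Klein four group. Since every Klein four subgroup of a dihedral group contains the center, one of $\alpha_0, \alpha_1, \alpha_0 \alpha_1$ equals $z$. A short intersection-property argument using $2$-adic valuations rules out the sub-case $\alpha_0 \alpha_1 = z$ with $\alpha_0, \alpha_1$ both reflections: there, writing $\alpha_2 = r^{a} s$, the conditions $\alpha_1 \notin \langle \alpha_0, \alpha_2 \rangle$ and $\alpha_0 \notin \langle \alpha_1, \alpha_2 \rangle$ force both $\gcd(m,a)$ and $\gcd(m, m/2 - a)$ to have $2$-adic valuation equal to $v_2(m)$, which is incompatible since $v_2(m/2) = v_2(m)-1$. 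After relabeling we may therefore take $\alpha_0 = z$, so that $\alpha_1$ and $\alpha_2$ are reflections.

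Finally, with $\alpha_0 = z$ the intersection property gives $z \notin \langle \alpha_1, \alpha_2 \rangle$, and generation of $H$ then forces $[H : \langle \alpha_1, \alpha_2 \rangle] = 2$; this pins down the subgroup structure and implies $m \equiv 2 \pmod 4$. The group $\Aut(H)$ acts transitively on such triples: it permutes the (at most two) index-$2$ dihedral subgroups of $H$ not containing $z$ via the outer automorphism $s \mapsto rs$, and within each, inner automorphisms by powers of $r$ together with $r \mapsto r^j$ act transitively on pairs of generating reflections. Any two admissible triples therefore produce isomorphic coset geometries, establishing uniqueness. Existence is confirmed by a direct check that $\{z, s, r^2 s\}$ satisfies the intersection property and flag-transitivity whenever $m \equiv 2 \pmod 4$ with $m/2 \geq 3$. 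The main obstacle will be the cardinality bookkeeping in the first step, which requires carefully exploiting the pairwise coprimality of the $d_{ij}$.
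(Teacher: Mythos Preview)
Your argument is correct and considerably more detailed than the paper's. The approaches differ substantially: the paper simply cites an external reference (\cite{CgroupSuzuki}) for the fact that a triple of involutions in a dihedral group forms a C-group only when one generator is central and lies outside the subgroup generated by the other two, and then exhibits the resulting geometry explicitly as an $(n/2)$-gon together with two extra ``universal'' points. By contrast, you prove the classification from scratch, and you do so using \emph{both} the intersection property and the flag-transitivity criterion of Theorem~\ref{thm:FT}; the paper's proof never explicitly invokes flag-transitivity, relying instead on the concrete model $\Gamma'$ to certify it.

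Your route has a real payoff: you extract the necessary condition $m\equiv 2\pmod 4$ (equivalently $n\equiv 4\pmod 8$), which the paper's statement silently elides. Your uniqueness argument via $\Aut(H)$ is also sound---since $\Aut(D_m)$ acts simply transitively on ordered pairs of generating reflections (both sets have cardinality $(m/2)\phi(m/2)$) and every such automorphism extends to $H$, the triples $(z,\alpha_1,\alpha_2)$ lie in a single orbit. The paper's ``up to isomorphism, this is the only possible choice of generators'' is asserting the same thing but without justification.

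Two small points. First, in the sub-case $\alpha_0\alpha_1=z$ you should note in one line why $\alpha_2\neq z$ (else $\alpha_2\in\langle\alpha_0,\alpha_1\rangle$, killing the intersection property immediately), so that $\alpha_2$ is indeed a reflection. Second, the inequality $|H_i|>4$ in your first step can be obtained more directly: since $\alpha_j,\alpha_k$ do not commute, $\alpha_j\alpha_k$ has order $m/d_{jk}>2$, whence $|H_i|=2m/d_{jk}>4$; no appeal to the triple product $d_{01}d_{02}d_{12}\mid m$ is needed there.
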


\begin{proof}
    Let $\alpha_0,\alpha_1$ and $\alpha_2$ be three involutions that generate $H$. In \cite{CgroupSuzuki}, it is shown that $(H,(\alpha_0,\alpha_1,\alpha_2))$ is a $C$-group only if one of the involutions, say $\alpha_0$, is central and $\alpha_0$ is not in $\langle \alpha_1,\alpha_2 \rangle$. Up to isomorphism, this is thus the only possible choice of generators.
    
    We define a geometry $\Gamma'$ as follows. Let $P$ be a polygon of with $n/2$ vertices and $n/2$ edges and let $Q$ and $R$ be two points. The elements of type $0$ of $\Gamma'$ are $Q$ and $R$, the elements of type $1$ are the vertices of $P$ and the elements of type $2$ are the edges of $P$. We declare that $Q$ and $R$ are incident to every elements of type $1$ and $2$ and that incidence between elements of type $1$ and $2$ is induced from the incidence in $P$.
    
    It can then be checked that $\Gamma = \Gamma(H,(H_0,H_1,H_2))$, where $H_i = \langle \alpha_j,\alpha_k \rangle$ with $\{i,j,k\} = \{0,1,2\}$, is isomorphic to $\Gamma'$. Since $\Gamma'$ is a clearly an hypertope, $\Gamma$ must also be an hypertope.
\end{proof}

We now look at the case where $l$ is tangent, and so its stabilizer is $\AGL(1,q)$.

\begin{lemma}\label{lem:HypertopeAGL}
    For any $q = p^n$ with $p$ an odd prime, there is no regular hypertope with automorphism group $H$ where $H$ is a subgroup of $\AGL(1,q)$ generated by involutions.
\end{lemma}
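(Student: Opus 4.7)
The plan is to assume for contradiction that $\Gamma$ is a regular hypertope of rank $3$ with $\Aut(\Gamma)=H\leq \AGL(1,q)$ and to derive a contradiction from a dichotomy on a generating triple of involutions. Since $p$ is odd, every involution in $\AGL(1,q)$ has the form $\alpha_b\colon x\mapsto -x+b$ for a unique $b\in \FF_q$, and the product $\alpha_{b_i}\alpha_{b_j}$ is the translation by $b_i-b_j$, of order $p$ when $b_i\neq b_j$. In particular, each $\langle\alpha_{b_i},\alpha_{b_j}\rangle$ is dihedral of order $2p$ and its translation subgroup is the $\FF_p$-line $\FF_p\cdot(b_i-b_j)\subseteq \FF_q$. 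Let $\alpha_0=\alpha_{b_0},\alpha_1=\alpha_{b_1},\alpha_2=\alpha_{b_2}$ be the three generating involutions guaranteed by the rank-$3$ hypertope structure.

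I would then case-split on the $\FF_p$-linear (in)dependence of $b_0-b_1$ and $b_0-b_2$. In the dependent case, the three $\FF_p$-lines $\FF_p(b_i-b_j)$ all coincide, so the three dihedral subgroups $\langle\alpha_i,\alpha_j\rangle$ are equal; in particular $\alpha_2\in \langle\alpha_0,\alpha_1\rangle$, which immediately violates the intersection property, ruling out any rank-$3$ hypertope. In the independent case, $H$ is a semidirect product $V\rtimes \langle\alpha_0\rangle$ with $V=\FF_p(b_0-b_1)+\FF_p(b_0-b_2)$ of $\FF_p$-dimension $2$, so $|H|=2p^2$. A short calculation (using that the two $\FF_p$-lines meet only in $0$) verifies the intersection property in this case, so the obstruction to $\Gamma$ being a hypertope must come from the flag-transitivity criterion.

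The main step is then to evaluate both sides of the criterion $(H_i\cap H_j)(H_i\cap H_k)=H_i\cap(H_j H_k)$ from Theorem \ref{thm:FT}, where $H_i=\langle\alpha_j,\alpha_k\rangle$ with $\{i,j,k\}=\{0,1,2\}$. The intersection property gives $|H_j\cap H_k|=2$, so $|H_j H_k|=|H_j|\,|H_k|/|H_j\cap H_k|=2p\cdot 2p/2=2p^2=|H|$; hence $H_j H_k=H$ and $H_i\cap (H_j H_k)=H_i$ has order $2p$. On the other hand, $(H_i\cap H_j)(H_i\cap H_k)=\langle\alpha_k\rangle\langle\alpha_j\rangle=\{e,\alpha_j,\alpha_k,\alpha_k\alpha_j\}$ has at most $4$ elements. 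Since $p\geq 3$, this is a contradiction. The only potential obstacle I foresee is keeping the two cases and the order counts straight; no deep structural input about $\AGL(1,q)$ beyond the description of its involutions is needed.
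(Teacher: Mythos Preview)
Your proof is correct and complete for the rank-$3$ case, which is the only case the lemma is applied to (and, as literally stated, the lemma fails for rank $2$ anyway, since a $p$-gon is a rank-$2$ hypertope with $\Aut=D_{2p}\leq\AGL(1,q)$). Your description of the involutions $\alpha_b$, the dihedral structure of $\langle\alpha_{b_i},\alpha_{b_j}\rangle$, the dichotomy on $\FF_p$-dependence of $b_0-b_1,b_0-b_2$, and the count $H_jH_k=H$ leading to $|H_i\cap(H_jH_k)|=2p>4$ are all accurate.

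The paper takes a genuinely different route. Instead of showing that the flag-transitivity criterion of Theorem~\ref{thm:FT} fails, the paper argues that the coset geometry $\Gamma(H,(H_i))$ is not \emph{thin}: once one knows $H_iH_j=H$ for all $i\neq j$, every element of type $i$ is incident to every element of type $j$, so each rank-$1$ residue contains all $p$ elements of the missing type. Your argument and the paper's are really two sides of the same coin---both hinge on the key identity $H_iH_j=H$ (equivalently $[H:H_i]=p$)---but they name different failed axioms. The paper also outsources the C-group characterization (affine independence of the $b_i$) to the reference \cite{CgroupPGL}, whereas you establish the rank-$3$ instance directly; in that sense your treatment is more self-contained. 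Conversely, the paper's formulation extends verbatim to any rank $r\geq 3$, while your counting argument, as written, is tailored to three generators.
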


\begin{proof}
    Let $\mathbb{F}_q$ be the field of cardinality $q$, seen as a vector space of dimension $n$ over $\mathbb{F}_p$. Then, any $g \in \AGL(1,q)$ can be seen as a linear map on $\mathbb{F}_q$. In that perspective, we have that $g(x_1,x_2,\cdots,x_n) = a(x_1,x_2, \cdots,x_n) +b$ where $a,b \in \mathbb{F}_p$. Since the characteristic of $\mathbb{F}_q$ is odd, the involutions of $\AGL(1,q)$ are the elements with $a = -1$. Any involution is thus of the form $(x_1,x_2,\cdots,x_n) \to (-x_1,-x_2, \cdots,-x_n) +b$, for some $b \in \mathbb{F}_p$. We say that $b$ is the point associated to the involution. Let $S$ be a set of involutions of $\AGL(1,q)$. We say that this set $S$ is affinely independent if the set of associated points is affinely independent. It is proven in \cite{CgroupPGL} that $(H,S)$ is a $C-$group if and only if $S$ is affinely independent. 
    Suppose thus that $S = \{\alpha_0,\alpha_1,\cdots,\alpha_k\}$ is affinely independent. Let $\Gamma = \Gamma(H,(H_0,H_1,\cdots, H_{k}))$ where $H_i$ is the group generated by all involutions of $S$ except from $\alpha_i$. Elements of type $i$ in $\Gamma$ can be identified with families of parallel hyperplanes in the affine plane of dimension $k-1$ over $\mathbb{F_p}$. Hence, every element of type $i$ is incident to every elements of type $j$ for every $i \neq j$. Therefore, $\Gamma$ is clearly not thin, as every residue of rank $1$ contains $p > 2$ elements.
\end{proof}
We are now ready to prove our first main theorem, which classifies all triples of involutions that give rise to rank $3$ regular hypertopes.

\begin{figure}
    \centering
    \scalebox{0.85}{%
    \begin{tikzpicture}
    \node[anchor=south west,inner sep=0] (image) at (0,0) {\includegraphics[width=0.9\textwidth]{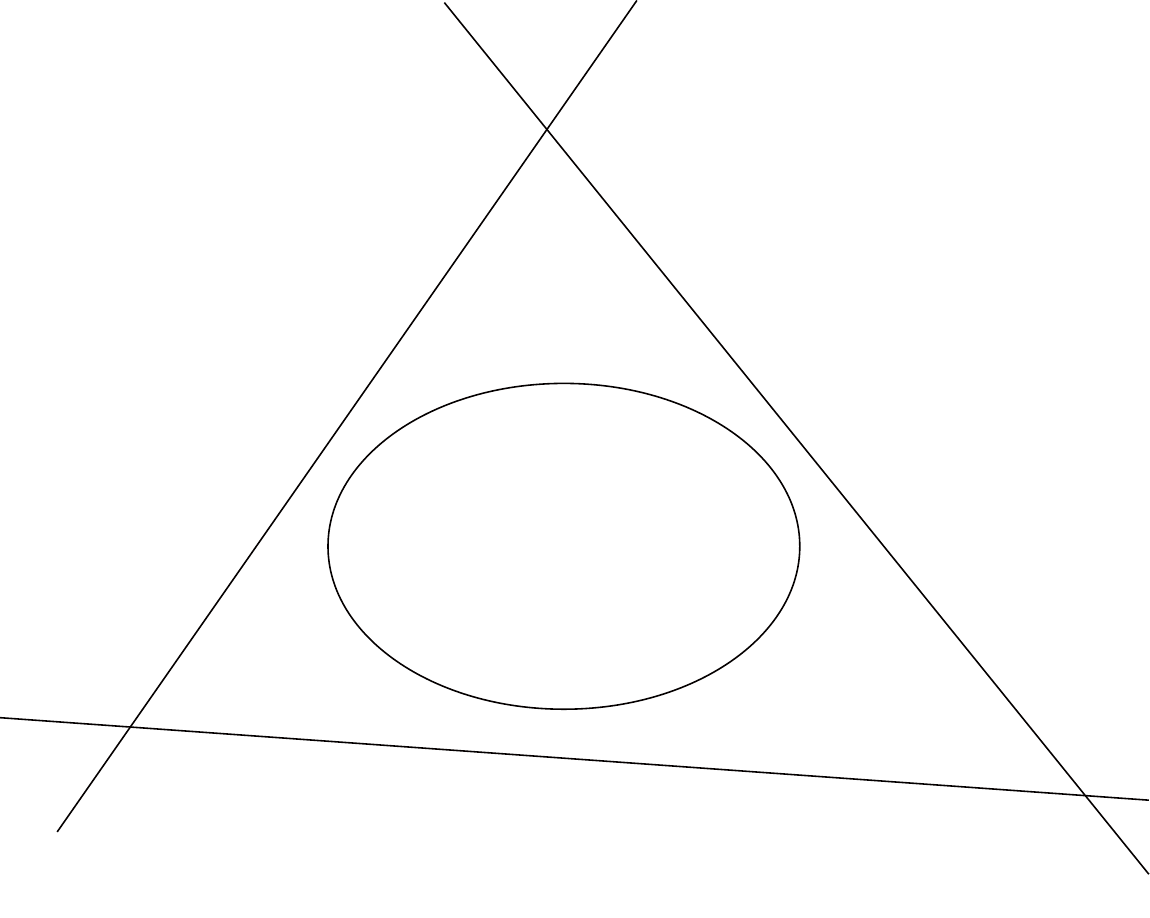}};
    \filldraw[black] (1.63,2.35) circle (2pt)  node[anchor=south]{$P$};
    \filldraw[black] (6.78,9.75) circle (2pt)  node[anchor=south]{$Q$};
    \filldraw[black] (13.42,1.54) circle (2pt)  node[anchor=south]{$R$};
    
    \filldraw[black] (4,6.3) circle (0pt)  node[anchor=east]{$l_2$};
    \filldraw[black] (11,6) circle (0pt)  node[anchor=east]{$l_0$};
    \filldraw[black] (8,1.5) circle (0pt)  node[anchor=east]{$l_1$};
    
    \end{tikzpicture}
    }
    \caption{ The triangle of points and lines in $\pi$ associate to $\Delta = \{\alpha_P,\alpha_Q,\alpha_R\}$.}
    \label{fig:triangleGeneral}

\end{figure}

\begin{theorem}\label{thm:main}
     Let $S = \{P,Q,R\}$ be a triple of points in $\pi \setminus \OO$ and let $\Delta = \{ \alpha_P,\alpha_Q , \alpha_R\}$. Let $\Gamma = \Gamma (H,(H_0,H_1,H_2))$ be the coset geometry where $H_0 = \langle \alpha_Q,\alpha_R \rangle$, $H_1 = \langle \alpha_P,\alpha_R \rangle, H_2 = \langle \alpha_P,\alpha_Q \rangle$ and $H$ is $\langle \alpha_P,\alpha_Q,\alpha_R \rangle$.
     Then $\Gamma$ is a regular hypertope if and only if $\Delta$ is a strongly non self-polar triangle. 
\end{theorem}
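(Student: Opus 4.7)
The plan is to prove the equivalence by translating the defining properties of a regular hypertope (intersection property, flag-transitivity, thinness, residual connectedness) into geometric conditions on the triangle $\Delta$, using the correspondence between involutions of $\PGL(2,q)$ and points of $\pi\setminus\OO$. For a rank $3$ coset geometry of the form considered here, thinness and residual connectedness follow formally from the intersection property once the C-group condition holds, so the heart of the argument is to relate the intersection property (IP) and the flag-transitivity condition of Theorem \ref{thm:FT} to strong non self-polarity.

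Before attacking the main case, I would dispose of the degenerate configurations. When $P,Q,R$ are collinear, or when $\Delta$ is not proper, Lemma \ref{lem:insidestab} places $H$ inside the stabilizer of a line $l$; Lemmas \ref{lem:HypertopeAGL} and \ref{lem:HypertopeDihedral} then either rule out any hypertope (tangent case) or force the unique hypertope to have a central generator whose center is the pole of $l$ and therefore cannot be any of $\alpha_P,\alpha_Q,\alpha_R$. Conversely, all such configurations are manifestly not strongly non self-polar: collinear triples are not triangles, while a non-proper triangle carries an obvious self-polar sub-configuration with vertices on different sides of $\Delta$.

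The core of the argument rests on the following geometric-algebraic dictionary: three distinct pairwise commuting involutions of $G$ generate a Klein four-group, and their centers form a self-polar triangle of $\pi$; conversely, every self-polar triangle arises this way. Under this dictionary, an involution $h \in H_1 \cap H_2H_0$ decomposed as $h = \alpha_X\alpha_Y$ with $\alpha_X\in H_2, \alpha_Y \in H_0$ forces $\alpha_X, \alpha_Y$ to commute (from $h^2 = e$), so $\{X, Y, C(h)\}$ is a self-polar triangle with one vertex on each side of $\Delta$. The same correspondence applies to any Klein-four subgroup arising in pairwise intersections of line stabilizers via Proposition \ref{prop:stabilizerIntersection}. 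On the other hand $(H_1\cap H_2)(H_1\cap H_0) = \langle\alpha_P\rangle\langle\alpha_R\rangle = \{e,\alpha_P,\alpha_R,\alpha_P\alpha_R\}$ contains as involutions only $\alpha_P$ and $\alpha_R$, since in the generic case $\alpha_P\alpha_R$ is a proper rotation in the dihedral group $H_1$.

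For ($\Rightarrow$), assuming IP and flag-transitivity hold, a forbidden self-polar triangle $\{X,Y,Z\}$ on different sides yields either an extra involution in some $H_i\cap H_j$ (violating IP) or an involution in $H_i\cap H_jH_k$ outside $(H_i\cap H_j)(H_i\cap H_k)$ (violating flag-transitivity). A subtle sub-case is when a vertex of the self-polar triangle coincides with a vertex of $\Delta$, which by IP applied to $H_0\cap H_2=\langle\alpha_Q\rangle$ and its permutations forces a second vertex to coincide and two of $\alpha_P,\alpha_Q,\alpha_R$ to commute, producing a self-polar triangle through the pole of the corresponding side. For ($\Leftarrow$), IP follows by applying Proposition \ref{prop:stabilizerIntersection} to each pair of line stabilizers and ruling out the Klein-four alternative via the dictionary, and flag-transitivity follows by applying the dictionary to exclude involution witnesses in $H_i\cap H_jH_k$ and then reducing non-involution elements to involutions via the dihedral structure of $H_i$. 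The main obstacle will be this reduction for non-involution elements, which requires careful bookkeeping using the dihedral presentations of the $H_i$; a secondary subtlety is the tangent-line subcase of Proposition \ref{prop:stabilizerIntersection} parts (2)--(3), where line stabilizer intersections are cyclic of order $q-1$ and demand a parallel argument using the structure of $\AGL(1,q)$.
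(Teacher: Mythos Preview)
Your handling of the main (proper triangle) case is essentially the paper's approach: the same dictionary between Klein four-groups and self-polar triangles, the same reduction of the flag-transitivity test to products of involutions, and the same appeal to Proposition~\ref{prop:stabilizerIntersection} for the intersection property. The reduction from non-involution elements $g_2g_0$ to a product of conjugated involutions is exactly what the paper does, and your worry about the tangent-line subcase is handled there by noting that $H_i$ in that situation contains only involutions and elements of order $p$, so the cyclic $C_{q-1}$ intersects $H_i$ only in its involution.

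There is, however, a genuine error in your treatment of the non-proper case. You assert that ``a non-proper triangle carries an obvious self-polar sub-configuration with vertices on different sides of $\Delta$'' and that the required central generator ``cannot be any of $\alpha_P,\alpha_Q,\alpha_R$''. Both claims are false. If $\Delta$ is not proper because, say, the axis of $\alpha_P$ is the line $(QR)$, then $P$ \emph{is} the pole of $(QR)$ and $\alpha_P$ \emph{is} the central involution in the stabilizer of $(QR)$. Lemma~\ref{lem:HypertopeDihedral} then says $\Gamma$ is a hypertope precisely when $\alpha_P\notin\langle\alpha_Q,\alpha_R\rangle$, and this condition is exactly strong non self-polarity for such a $\Delta$: the only candidate self-polar triangles with vertices on different sides must use $P$ itself (the pole of $l_0$) together with two points on $l_0$, and the involution at the third vertex lies in $H_0$ if and only if $\alpha_P\in H_0$. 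So in the non-proper case both sides of the biconditional can hold, and your argument that they both always fail collapses. You need to replace the dismissal of this case with the equivalence $\alpha_P\notin\langle\alpha_Q,\alpha_R\rangle \Leftrightarrow \Delta$ strongly non self-polar, which is what the paper does.
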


\begin{proof}

    We divide the proof in three distinct cases, according to the configurations of the centers and axis of $\alpha_P, \alpha_Q$ and $\alpha_R$.
    
    First, suppose that $P,Q$ and $R$ are all on a line $l$. Then $\Delta$ is not a triangle, and we need to show that $\Gamma$ is not a regular hypertope.
    If the line $l$ is not tangent, $H$ is a subgroup of a dihedral group. Note that none of $\alpha_P,\alpha_Q$ and $\alpha_R$ is central in $H$, as it is impossible for the axis of one of the involutions to be the line $l$. Hence, by Lemma \ref{lem:HypertopeDihedral}, $\Gamma$ is not an hypertope.
    If instead $l$ is tangent, $H$ is then a subgroup of $\AGL(1,q)$ generated by involutions. Lemma \ref{lem:HypertopeAGL} shows that $\Gamma$ cannot be an hypertope.

    Suppose now that $\Delta$ is a triangle but that it is not proper. Let us further suppose, without loss of generality, that the axis of $\alpha_P$ is the line $(QR)$. Note that this implies that $(QR)$ is not tangent, as tangent lines are self-polar. Hence, by Lemma \ref{lem:HypertopeDihedral}, $\Gamma$ is an hypertope if and only if $\alpha_P$ is not in $\langle \alpha_Q,\alpha_R \rangle$. Now remark that the non proper triangle $\Delta$ is strongly non self-polar if and only if $\alpha_P$ is not in $\langle \alpha_Q,\alpha_R \rangle$. Therefore, in this case, we indeed have that $\Gamma$ is an hypertope if and only if $\Delta$ is strongly non self-polar.

    Finally, it remains to handle the case where $\Delta$ is a proper triangle. We will first show that if $\Delta$ is strongly non self-polar, then $\Gamma$ is a regular hypertope.
    
    Let $I = \{0,1,2\}$. For the sake of convenience, we relabel $\alpha_P = \alpha_0, \alpha_Q = \alpha_1$ and $\alpha_R = \alpha_2$ and we set $l_0 = (QR),l_1 = (PR)$ and $l_2 = (PQ)$ to be the three lines of $\Delta$. This configuration is represented in Figure \ref{fig:triangleGeneral}. It is recommended to look at this figure and visualize where the centers and axis of the involutions used in this proof are while reading the following paragraphs.
    
    We first show that $H_i \cap H_j = \langle \alpha_k \rangle$ for any $\{i,j,k\} = I$. Clearly, $\langle \alpha_k \rangle \subset H_i \cap H_j $. Hence, we only need to show the reverse inclusion.
    We will use Proposition \ref{prop:stabilizerIntersection}. If the lines $l_i$ and $l_j$ are both not tangent to $\OO$, Proposition \ref{prop:stabilizerIntersection} tells us that their stabilizers intersect in a cyclic group of order $2$, unless $\Delta$ is self-polar.
    Suppose now that $l_i$ is a tangent line. Then the intersection of the stabilizer of $l_i$ and the stabilizer of $l_j$ is always a cyclic group of order $2$ or a cyclic group $C_{q-1}$ of order $q-1$. But $H_i$ contains only involutions and elements of order $p$ so the intersection of $H_i$ and $H_j$ is at most the cyclic subgroup of order $2$ of $C_{q-1}$. Hence, by Proposition \ref{prop:RC}, we already see that $\Gamma$ will be residually connected, as long as $H$ acts flag-transitively on $\Gamma$.

    We now turn our attention to flag-transitivity. We will use Proposition \ref{thm:FT}, or more precisely, the rank $3$ version stated just after the proposition. For clarity, and without loss of generality, we will fix an order of the indices $i,j,k$ and show that

    \begin{equation} \label{eq:FT}
    H_1 \cap H_2 H_0 = (H_1 \cap H_2)(H_1 \cap H_0) = \{e,\alpha_0,\alpha_2,\alpha_0\alpha_2\} \subset H_1.
    \end{equation}
    
    The inclusion $\{e,\alpha_0,\alpha_2,\alpha_0\alpha_2\} \subset H_1 \cap H_2 H_0$ always holds, so we will only prove the reverse inclusion.
    Let $g  = g_2g_0$ with $g_2 \in H_2$ and $g_0 \in H_0$. We need to show that if $g_2g_0 \in H_1$, then it must be either trivial or equal to one of $\alpha_0, \alpha_2$ or $\alpha_0\alpha_2$. We will consider various cases, depending on the orders of $g_2$ and $g_0$. Note that we can always suppose that neither $g_2$ nor $g_0$ is the trivial element. Indeed, if instead, for example, $g_2$ were trivial, we would immediately get that $g \in H_1 \cap H_2$, so that $g$ is either trivial or equal to $\alpha_0$. 

    Suppose first that both $g_2$ and $g_0$ are involutions and let $X$ be the center of $g_2$, $Y$ be the center of $g_0$ and $l =(XY)$. We claim that, as long as $l$ is not equal to $l_1$, if $g =g_2g_0$ is in $H_1$, then $g$ must be an involution. Indeed, if $l$ is not equal to $l_1$, then $g \in H_1 \cap \langle g_2,g_0 \rangle$. By Proposition \ref{prop:stabilizerIntersection}, we know that the intersection of two dihedral subgroups of stabilizers of different lines is at most $C_2$, a cyclic group of order $2$. 
    So we know that, if $g_2$ and $g_0$ are involutions, $g = g_2g_0$ is also an involution.
    We now consider three cases, according to whether the involutions $g_2$ and $g_0$ are central or not in $H_2$ and $H_0$, and draw conclusions on $g = g_2g_0$ in each case.
    \begin{enumerate}
        \item Both $g_2$ and $g_0$ are not central. Then $X$ is on the side $(\alpha_0,\alpha_1)$ of $\Delta$ and $Y$ is on the side $(\alpha_1,\alpha_2)$ of $\Delta$. If $l =(XY)$ is equal to $l_1$, this must mean that $g_2 = \alpha_0$ and $g_0 = \alpha_2$ so that $g = \alpha_0\alpha_2$. If $l$ is not equal to $l_1$, we showed that $g$ must then be an involution of $H_1$. Note that $g$ must be central in $\langle g_2,g_0 \rangle$ as $g$ is an involution which is the product of two involutions. Therefore, for $g$ to be in $H_1$, we must have that the center $Z$ of $g$ is on $H_1$. But then, $\{X,Y,Z\}$ is a self-polar triangle, contradicting the strongly non self-polarity of $\Delta$.
        \item Both $g_2$ and $g_0$ are central. This means that the axis $l(g_2) = l_2$ and the axis $l(g_0) = l_0$. In this case $l = (XY)$ cannot be equal to $l_1$ as the triangle $\Delta$ is proper. Since $g$ must fix $Q$ and the line $l_1$, the only remaining possible case is that the center of $g$ is on $l_1$ and its axis contains $Q$. Hence, $g$ is an involution, all three of $g_2,g_0$ and $g$ belong to the polar of $Q$ and they all have centers on this polar. But three involution that all commute cannot all have center on the same line, as one of them must be central in the stabilizer of that line.
        \item Only $g_2$ is central and $g_0$ is not. Suppose first that $l =(XY)$ is equal to $l_1$. This means that the center $X$ of $g_2$ is on $l_1$ and that $g_0 = \alpha_2$. This implies that $g_2 \in H_1$. Let $l'$ be the polar of $Q$ and let $Z$ be the intersection of $l'$ with $l_2$. Then $\{X,Z,Q \}$ is a self-polar triangle contradiction the strongly non self-polarity of $\Delta$. Indeed, $X$ is on $(\alpha_0,\alpha_2)$ since $g_2 = \alpha_X$ is in $H_1$ and $Z$ must be on $(\alpha_0,\alpha_1)$ as $\alpha_Z$ is the product of $\alpha_1$ and $g_2$, which are both in $H_2$. If instead $l$ is not equal to $l_1$, then $g$ is an involution and we can go back to case 1) or 2) depending on whether $g_2g_0$ is central in $H_1$ or not.
    \end{enumerate}
    Putting together all the cases above, we showed that if both $g_2$ and $g_0$ are involutions, then $g_2g_0$ is either not in $H_1$ or is exactly one of $\{e,\alpha_0,\alpha_2,\alpha_0\alpha_2\}$, as we desired. More precisely, we showed that, if $g_2$ and $g_0$ are involutions, the only way that $g_2g_0 \in H_1$ is if $g_2 \in \{e,\alpha_0 \}$ and $g_0 \in \{e,\alpha_2\}$. We also remark, that, up to appropriately relabeling the indices, the same conclusion holds for the product of two involutions in $H_i$ and $H_j$ for any values of $i,j \in I$. 

Suppose now that neither $g_2$ nor $g_0$ are involutions. Therefore, we must have that $g_2 = (\alpha_0\alpha_1)^k$ and $g_0 = (\alpha_1\alpha_2)^m$
    for some integers $k = 1,2,\cdots, O(\alpha_0\alpha_1)$ and $m = 1,2, \cdots, O(\alpha_1\alpha_2)$. If $k=m=1$, we have that $g_2g_0 = \alpha_0\alpha_2$. Else, we obtain that

    $$
    g_2g_0 = (\alpha_0\alpha_1)^{k-1}(\alpha_0\alpha_2)(\alpha_1\alpha_2)^{m-1}.
    $$

    Note that $(\alpha_0\alpha_1)^{k-1}\alpha_0 = \alpha_0^h$ with $h \in H_2$ if $k$ is odd and $(\alpha_0\alpha_1)^{k-1}\alpha_0 = \alpha_1^h$ with $h$ in $H_2$ if $k$ is even. The same holds for $\alpha_2(\alpha_1\alpha_2)^{m-1}$. Therefore, we get that $g_2g_0$ can be rewritten as a product of two involutions, one in $H_2$ and one in $H_0$. Remark also that, since neither $g_2$ nor $g_0$ are trivial, the two involutions are also both not trivial. By the previous case, we can then conclude that, if $g \in H_1$, then $g=\alpha_0\alpha_2$. Once again, the same conclusion holds, up to relabeling of the indices, for the product of any two non involutions in $H_i$ and $H_j$ for $i,j \in I$.

    Lastly, suppose that $g_2$ is not an involution but that $g_0$ is. Suppose that $g_2g_0$ is also an involution. Then $g_0$ in an involution in $H_0$, $g_2g_0$ is an involution in $H_1$ and their product is equal to $g_2$. We can then conclude that $ g = g_2g_0 \in \{e,\alpha_0\}$. Suppose instead that $g_2g_0$ is not an involutions. Then $g_0 = g_2^{-1}(g_2g_0)$ with $g_2^{-1} \in H_2$ not an involution, $g_2g_0 \in H_1$ not an involution and $g_0 \in H_0$. By the previous case, we would conclude that $g_0 = \alpha_0\alpha_2$, which is not an involution, and we thus arrive at a contradiction.

    This shows that $\Gamma$ is flag-transitive, and thus also that $\Gamma$ is thin, since a rank $3$ flag-transitive coset geometry is thin if $H_i \cap H_j = C_2$. As mentioned in the beginning, Corollary \ref{prop:RC} then implies that $\Gamma$ is also residually connected. Hence, we showed that $\Gamma$ is a regular hypertope whenever $\Delta$ is a strongly non self-polar triangle.

    Conversely, suppose that $\Delta$ is proper but not strongly non-self polar. Let $A,B,C$ be a self-polar triangles such that $A,B$ and $C$ are on different sides of $\Delta$. Note that since $\Delta$ is proper, we can suppose that, say, $C$ is different from $P,Q$ and $R$. Then, we have, up to potential reordering, that $\alpha_A \alpha_B = \alpha_C$ with $\alpha_A \in H_2, \alpha_B \in H_0$ and $\alpha_C \in H_1$. This directly contradicts equation \ref{eq:FT} and thus shows that $\Gamma$ is not an hypertope.
    
\end{proof}

Checking that a triangle $\Delta$ of involutions is strongly non self-polar seems hard in general. That being said, it can be easily checked in some cases. 
\begin{proposition}\label{prop:notPSL}
    Let $S = \{P,Q,R\}$ be a triple of points in $\pi \setminus \OO$ such that $\Delta = \{ \alpha_P,\alpha_Q , \alpha_R\}$ is a triangle and such that none of $\alpha_P,\alpha_Q$ and $\alpha_R$ are in $\PSL(2,q)$. Then $\Delta$ is strongly non self-polar. 
\end{proposition}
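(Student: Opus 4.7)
The plan rests on the sign homomorphism $\sigma\colon\PGL(2,q)\twoheadrightarrow\{\pm 1\}$ whose kernel is $\PSL(2,q)$; the hypothesis translates to $\sigma(\alpha_P)=\sigma(\alpha_Q)=\sigma(\alpha_R)=-1$.

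Arguing by contradiction, suppose there is a self-polar triangle $\{X,Y,Z\}$ whose vertices can be assigned one each to the three sides, say $\alpha_X\in H_0=\langle\alpha_Q,\alpha_R\rangle$, $\alpha_Y\in H_1=\langle\alpha_P,\alpha_R\rangle$, and $\alpha_Z\in H_2=\langle\alpha_P,\alpha_Q\rangle$. As recalled at the start of Section~\ref{sec:prelim}, self-polarity of $\{X,Y,Z\}$ is equivalent to $\{e,\alpha_X,\alpha_Y,\alpha_Z\}$ being a Klein four-group, so $\alpha_X\alpha_Y\alpha_Z=e$; applying $\sigma$ gives
\begin{equation*}
\sigma(\alpha_X)\,\sigma(\alpha_Y)\,\sigma(\alpha_Z)=1.
\end{equation*}

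The key observation is a dichotomy inside each of the dihedral groups $H_0,H_1,H_2$. Consider $H_0$: since $\sigma(\alpha_Q\alpha_R)=(-1)(-1)=1$, the cyclic subgroup $\langle\alpha_Q\alpha_R\rangle$ is contained in $\PSL(2,q)$, and the involutions of $H_0$ split into the reflections lying in the nontrivial coset $\alpha_Q\langle\alpha_Q\alpha_R\rangle$, each of sign $-1$, and---when $|\alpha_Q\alpha_R|$ is even---a single central involution $(\alpha_Q\alpha_R)^{|\alpha_Q\alpha_R|/2}\in\PSL(2,q)$ of sign $+1$, which geometrically is the homology with axis $(QR)$ and center $\psi(QR)$. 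The analogous dichotomy holds in $H_1$ and $H_2$. In the generic case, where none of $\alpha_X,\alpha_Y,\alpha_Z$ is the central involution of its dihedral group, all three signs equal $-1$ and the displayed identity collapses to $(-1)^3=1$, a contradiction.

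The main obstacle is handling the configurations in which at least one of $\alpha_X,\alpha_Y,\alpha_Z$ is central in its dihedral group. I would translate the algebraic constraint into geometry via Proposition~\ref{prop:stabilizerIntersection}: if, for instance, $\alpha_X$ is central in $H_0$, then $X=\psi(QR)$ and the axis of $\alpha_X$ is $(QR)$, so self-polarity of $\{X,Y,Z\}$ forces $(YZ)=(QR)$ and thus both $Y$ and $Z$ lie on the line $(QR)$. Combined with the structural description of the sides $(\alpha_P,\alpha_R)$ and $(\alpha_P,\alpha_Q)$ in terms of reflections and poles, this restricts $\{Y,Z\}$ to a small finite list of candidate points, each of which I would then show leads either to $P,Q,R$ being collinear (contradicting the triangle hypothesis) or to $\{P,Q,R\}$ itself being self-polar, in which case the sign argument applied to $\alpha_P\alpha_Q\alpha_R=e$ gives $(-1)^3=1$, the final contradiction.
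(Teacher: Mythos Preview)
Your sign-homomorphism argument is exactly the paper's: define $\det(g)=\pm1$ according to membership in $\PSL(2,q)$, note that in each dihedral group $H_i$ every non-central involution has $\det=-1$ while the central one (if it exists) has $\det=+1$, and derive a parity contradiction from $\alpha_X\alpha_Y\alpha_Z=e$. The paper stops there, simply writing $\det(\alpha_X)\det(\alpha_Y)=(-1)(-1)=1\neq\det(\alpha_Z)$ without addressing the possibility that one of the three involutions is central; you are right to flag this as a case needing separate treatment.

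Your proposed treatment of that case, however, does not go through. Suppose $\alpha_Q$ and $\alpha_R$ commute, with both outside $\PSL(2,q)$ --- such pairs exist: any Klein four-subgroup of $\PGL(2,q)$ not contained in $\PSL(2,q)$ has exactly two of its three involutions outside $\PSL(2,q)$. Then $H_0=\{e,\alpha_Q,\alpha_R,\alpha_X\}$ with $X=\psi(QR)$, and $\{X,Q,R\}$ is a self-polar triangle. Now choose any $P$ with $\alpha_P\notin\PSL(2,q)$ and $P\notin(QR)$. Since $X\in(\alpha_Q,\alpha_R)$, $R\in(\alpha_P,\alpha_R)$, and $Q\in(\alpha_P,\alpha_Q)$, the vertices of $\{X,Q,R\}$ lie on the three different sides of $\Delta$; yet $P$ was chosen freely, so $\{P,Q,R\}$ need be neither collinear nor self-polar, and the contradiction you promise does not materialise. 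In fact this configuration shows that, under the paper's literal definition of the sides (which includes the centre of the central involution), the proposition as stated fails. What the sign argument genuinely proves --- and what is actually invoked in case~(1) of the proof of Theorem~\ref{thm:main} --- is only that no self-polar $\{X,Y,Z\}$ exists with each $\alpha_X,\alpha_Y,\alpha_Z$ a \emph{non-central} involution of its respective $H_i$.
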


\begin{proof}
    The group $\PSL(2,q)$ is a normal subgroup of index $2$ in $\PGL(2,q)$. For any $g \in \PGL(2,q)$, define a determinant function by setting $\det(g) = 1$ if $g \in \PSL(2,q)$ and $\det(g) = -1$ otherwise. This function is clearly multiplicative as it is just the group operation on the quotient $\PGL(2,q)/\PSL(2,q)$. Notice that, in each $H_i$, since $\det(\alpha_j) = -1 =\det(\alpha_k)$, all involutions except the central one will have determinant $-1$, and all other elements will have determinant equal to $1$. Suppose that you can find a self-polar triangle $\{X,Y,Z\}$ such that $\alpha_X,\alpha_Y$ and $\alpha_Z$ belong to $H_0,H_1$ and $H_2$ respectively. Then $\alpha_X \alpha_Y = \alpha_Z$ but $\det(\alpha_X) \det(\alpha_Y) = (-1) (-1) = 1 \neq \det(\alpha_Z)$, which is a contradiction.
\end{proof}

\begin{comment}
Using Proposition \ref{prop:notPSL} together with Theorem \ref{thm:main}, we can easily construct many regular hypertopes.

\begin{corollary}
    Let $\alpha_0,\alpha_1$ and $\alpha_2$ be three involutions of $\PGL(2,q) - \PSL(2,q)$ such that they generate $\PGL(2,q)$. Then, $\Gamma = \Gamma (H,(H_0,H_1,H_2))$ is a flag-transitive, residually connected and thin geometry, where $H_0 = \langle \alpha_Q,\alpha_R \rangle$, $H_1 = \langle \alpha_P,\alpha_R \rangle$ and $H_2 = \langle \alpha_P,\alpha_Q \rangle$ and $H$ is $\langle \alpha_P,\alpha_Q,\alpha_R \rangle$. 
\end{corollary}
\begin{proof}
    By Proposition \ref{prop:notPSL}, we know that $\Delta = \{\alpha_0,\alpha_1,\alpha_2 \}$ is strongly non self-polar. By Lemma \ref{lem:insidestab}, we know that $\Delta$ must be proper. We can thus conclude using Theorem \ref{thm:main}
\end{proof}
\end{comment}

In particular, if $q = 3 \pmod 4$, any triple $\Delta = \{\alpha_P,\alpha_Q,\alpha_R \}$ with $P,Q,R$ the vertices of a triangle of tangent lines will satisfy all conditions of Theorem \ref{thm:main}. Such a $\Delta$ will be called a \textit{triangle tangent to $\OO$}.

\begin{theorem}
       Let $S = \{P,Q,R\}$ be a triple of points in $\pi \setminus \OO$ such that $\Delta = \{ \alpha_P,\alpha_Q , \alpha_R\}$ is a triangle tangent to $\OO$.  Let $H_0 = \langle \alpha_Q,\alpha_R \rangle$, $H_1 = \langle \alpha_P,\alpha_R \rangle$ and $H_2 = \langle \alpha_P,\alpha_Q \rangle$ and $H$ is $\langle \alpha_P,\alpha_Q,\alpha_R \rangle$.
       Then, if $q = 3 \pmod 4$, $\Gamma = \Gamma (H,(H_0,H_1,H_2))$ is a regular hypertope. 
\end{theorem}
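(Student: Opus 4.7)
The plan is to derive this as a direct consequence of Theorem \ref{thm:main} combined with Proposition \ref{prop:notPSL} and Lemma \ref{lem:fixedpointsonO}. Since Theorem \ref{thm:main} already characterizes exactly when the coset geometry $\Gamma(H,(H_0,H_1,H_2))$ is a regular hypertope in terms of $\Delta$ being a strongly non self-polar triangle, I only need to verify that hypothesis for the triangles tangent to $\OO$ when $q \equiv 3 \pmod 4$.

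First, I would unpack what it means for $\Delta$ to be tangent to $\OO$. By definition, each of the sides $(PQ)$, $(QR)$, $(PR)$ is a tangent line to $\OO$, so each vertex $P, Q, R$ is the intersection of two tangent lines. In particular, each of $P, Q, R$ is an exterior point of $\OO$ (and not on $\OO$ itself, since tangent lines meet $\OO$ in only one point, which cannot be the intersection of two distinct tangents). This already guarantees that $\Delta$ is a triangle in the sense of the paper, since $P, Q, R$ are genuinely non-collinear vertices of a triangle of tangent lines.

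Next, I would apply Lemma \ref{lem:fixedpointsonO} together with the observation (already recorded in the paragraph following that lemma) that when $q \equiv 3 \pmod 4$ the involutions in $\PSL(2,q)$ correspond precisely to the interior points of $\OO$. Since $P, Q, R$ are all exterior, none of the involutions $\alpha_P, \alpha_Q, \alpha_R$ lies in $\PSL(2,q)$. Proposition \ref{prop:notPSL} then immediately yields that $\Delta$ is strongly non self-polar.

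Finally, I would invoke Theorem \ref{thm:main}: since $\Delta$ is a strongly non self-polar triangle (and in particular, $P, Q, R$ are not collinear), the coset geometry $\Gamma = \Gamma(H, (H_0, H_1, H_2))$ is a regular hypertope, which is exactly what the statement asserts. There is no serious obstacle here: the work has all been done in the preceding results, and this theorem is essentially a corollary isolating a clean hypothesis (tangency of $\Delta$ plus $q \equiv 3 \pmod 4$) under which the main theorem's condition is automatically satisfied.
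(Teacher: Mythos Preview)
Your proposal is correct and follows essentially the same route as the paper: show that the vertices of a tangent triangle are exterior points, use Lemma \ref{lem:fixedpointsonO} with $q\equiv 3\pmod 4$ to conclude that $\alpha_P,\alpha_Q,\alpha_R\notin\PSL(2,q)$, apply Proposition \ref{prop:notPSL} to get strong non self-polarity, and finish with Theorem \ref{thm:main}. The paper's proof is just a terser version of exactly this argument.
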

\begin{proof}
     The points $P,Q$ and $R$ must be exterior points as they lie on tangent lines. Since $q = 3 \pmod 4$, we get, by Lemma \ref{lem:fixedpointsonO}, that $\alpha_P,\alpha_R$ and $\alpha_Q$ are not in $\PSL(2,q)$. Hence $\Delta$ is strongly not self-polar by Proposition \ref{prop:notPSL}. Moreover, $\Delta$ must be a triangle since it is not possible for $P,Q$ and $R$ to be on a line. Hence, $\Gamma$ is a regular hypertope by Theorem \ref{thm:main}.

\end{proof}

\section{Groups generated by non self-polar triangles of involutions}

For any proper and strongly non self-polar triangle of involutions  $\Delta = \{ \alpha_P,\alpha_Q , \alpha_R\}$, we constructed hypertopes as coset geometries with automorphism group the group $H$ generated by these $3$ involutions. We now investigate what that group $H$ can be. For the sake of generality, we work with non self-polar triangles.

\begin{theorem} \label{thm:generationGeneral}
 Let $S = \{P,Q,R\}$ be a triple of points in $\pi \setminus \OO$ such that $\Delta = \{ \alpha_P,\alpha_Q , \alpha_R\}$ is a triangle which is not self-polar. Then the group $H = \langle \alpha_P,\alpha_R,\alpha_Q \rangle$ is isomorphic to $\PSL(2,q_0)$ or $\PGL(2,q_0)$, for some $q_0$ dividing $q$, or to $A_5$, $S_4$. 

\end{theorem}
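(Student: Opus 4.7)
The plan is to combine the classification of maximal subgroups of $\PGL(2,q)$ and $\PSL(2,q)$ (Theorems \ref{thm:MaxSubPGL} and \ref{thm:MaxSubPSL}) with an induction on $|H|$. If $H=\PGL(2,q)$ or $H$ contains $\PSL(2,q)$, the conclusion is immediate with $q_0=q$. Otherwise $H$ sits inside a proper maximal subgroup $M$ of $\PGL(2,q)$, and by iterating we may assume $M$ is of one of the five types listed in Theorem \ref{thm:MaxSubPGL} (with the extra $\PSL$-types handled the same way). The subfield possibilities $\PSL(2,q_0),\PGL(2,q_0)$ and the sporadic possibilities $A_5,S_4$ are exactly what the statement allows, so the remaining task is to rule out the Borel case, the dihedral case, and the $A_4$ case.

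The Borel case is disposed of directly from the geometric picture. The maximal Borel $C_p^n\!:\!C_{q-1}$ is the stabilizer of a tangent line $l$ to $\OO$, and by Proposition \ref{prop:stabilizerIntersection}(2)--(3) every involution it contains is a perspectivity whose center lies on $l$. Hence if $H$ were Borel, the centers $P,Q,R$ would lie on $l$, contradicting the hypothesis that $\Delta$ is a triangle. For $A_4$, the three involutions pairwise commute and form a Klein four-subgroup; any three involutions in $A_4$ therefore generate a subgroup of $V_4$, and three pairwise commuting involutions of $G$ correspond to a self-polar triangle (as noted at the end of Section~\ref{sec:prelim}), contradicting the assumption that $\Delta$ is not self-polar.

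The main obstacle is the dihedral case $M=D_{2(q\pm 1)}$, which is the stabilizer of a secant or exterior line $l$. The involutions of $M$ split into the unique central involution $\alpha_{P_0}$ (whose axis is $l$ and whose center $P_0$ is the pole of $l$) together with the involutions whose centers lie on $l$. If none of $\alpha_P,\alpha_Q,\alpha_R$ equals $\alpha_{P_0}$, then the three centers all lie on $l$ and $\Delta$ is not a triangle, contradiction. Otherwise exactly one of them, say $\alpha_P$, equals $\alpha_{P_0}$, forcing $Q,R\in l$ and $\alpha_P$ to commute with both $\alpha_Q$ and $\alpha_R$; by Lemma \ref{lem:insidestab}, $H$ is then $\langle\alpha_Q,\alpha_R\rangle$ or $\langle\alpha_P\rangle\times\langle\alpha_Q,\alpha_R\rangle$. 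The delicate point is to use the non-self-polarity of $\Delta$, together with the precise position of $Q,R$ on $l$ relative to the polarity $\psi$, to exclude all but the configurations in which $H$ is already one of the small subfield groups in the statement (in particular $S_3\cong\PGL(2,2)\cong\PSL(2,2)$). This final step is the technical heart of the argument, since it is the only place where the full strength of the "not self-polar" hypothesis is needed.
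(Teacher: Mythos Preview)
Your overall strategy matches the paper's: descend through maximal subgroups via Theorems \ref{thm:MaxSubPGL} and \ref{thm:MaxSubPSL}, keep the subfield and sporadic types, and rule out line stabilizers and $A_4$. Your Borel and $A_4$ arguments are correct and essentially the paper's.

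The gap is in the dihedral case. You correctly reach the configuration where one generator, say $\alpha_P$, is the central involution with axis $l$, so $Q,R\in l$ and $H\cong D_n$ or $C_2\times D_n$ with $n=O(\alpha_Q\alpha_R)$. But your proposed endgame --- using non-self-polarity ``together with the precise position of $Q,R$ on $l$'' to force $H$ down to a small subfield group such as $S_3\cong\PGL(2,2)$ --- cannot succeed. First, $2\nmid q$, so $\PGL(2,2)$ is not even on the allowed list. More fundamentally, ``not self-polar'' only forbids \emph{all three} sides from being polars of the opposite vertices; it is perfectly consistent with $l(\alpha_P)=(QR)$ alone, and in that situation $n$ may be any divisor of $q\pm1$. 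Neither $D_n$ nor $C_2\times D_n$ for general such $n$ is isomorphic to $\PSL(2,q_0)$, $\PGL(2,q_0)$, $A_5$ or $S_4$. So the step you label ``the technical heart'' is not merely unwritten but unwritable under the hypothesis as you are reading it.

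The paper handles this point differently: it asserts that the configuration $l(\alpha_R)=(PQ)$ is ``a direct contradiction with the fact that $\Delta$ is not self-polar,'' i.e., the dihedral case is declared excluded outright rather than whittled down. In effect the paper is using ``not self-polar'' with the force of ``proper,'' which is precisely the hypothesis that does kill this configuration in one line; both downstream applications of the theorem (Corollary \ref{coro:PGLnonLinear} and Theorem \ref{thm:generationTangent}) in fact involve proper triangles. You should align your reading of the hypothesis with that, rather than attempt the reduction you sketch.
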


\begin{proof}
    We first show that the only maximal subgroups $M$ of $\PGL(2,q)$ that can contain $H$ are of the form $\PGL(2,q')$ or $\PSL(2,q')$, for some $q'$ dividing $q$, or are isomorphic to $A_5$ or $S_4$. Then, either $H$ is one of the above groups, or the same reasoning can be applied to their maximal subgroups. We can continue to argue like this until we either find a $q_0$ that works, or we arrive at the conclusion that $H$ cannot be contained in any of the maximal subgroups of $\PGL(2,q)$, except potentially $\PSL(2,q)$, and that it must therefore be equal to $\PGL(2,q)$ or $\PSL(2,q)$.
    
    Suppose first that $\alpha_P,\alpha_Q$ and $\alpha_R$ are all in $\PSL(2,q)$. Then it is clear that $H$ is a subgroup of $\PSL(2,q)$. We use the classification of maximal subgroups of $\PSL(2,q)$ of Theorem \ref{thm:MaxSubPSL}. Recall that we see $\PSL(2,q)$ as a group of collineations of $\pi$ fixing $\OO$. As such, maximal subgroups of type (1),(2) and (3) in Theorem \ref{thm:MaxSubPSL} correspond to stabilizers of tangent, secant and exterior lines respectively. Suppose $l$ is a line of $\pi$ and that $H$ is a subgroup of the stabilizer $K$ of $l$ in $\PSL(2,q)$. Let $\alpha \in K$ be any involution. Then either the center of $\alpha$ is on $l$ or the axis of $\alpha$ is $l$. Since $\Delta$ is a triangle, the points  $P,Q$ and $R$ cannot all be on $l$. Moreover, any two involutions of $\PSL(2,q)$ having the same axis are equal, so at most one of $\alpha_P,\alpha_Q$ and $\alpha_R$ can have $l$ as an axis. This means that the only possible configuration for $\alpha_P,\alpha_Q$ and $\alpha_R$ to all be in $K$ is that two of the centers are in $l$ and one of the axis is $l$. Suppose thus that $l = (PQ)$ and that the axis of $\alpha_R$ is $l$. That is a direct contradiction with the fact that $\Delta$ is not self-polar. Therefore, $H$ cannot be a subgroup of a maximal subgroup of type (1),(2) or (3). 
    Subgroups of type (6),(7) and (8) corresponds to the groups $A_4$, $A_5$ and $S_4$. There are only $3$ involutions in $A_4$ and they all commute. So $H$ cannot be a subgroup of $A_4$. It is not hard to show that if $H$ is a subgroup of $A_5$ or $S_4$, it must actually be equal to $A_5$ or $S_4$.
    In conclusion, $H$ is either equal to $A_5$ or $S_4$, or is a subgroup of $\PGL(2,q')$ or $\PSL(2,q')$, for some $q'$ dividing $q$. Repeating this reasoning, we arrive to the desired conclusion.

    Suppose now that $\alpha_P \notin \PSL(2,q)$. Suppose first that $H$ contains $\PSL(2,q)$. Since $H$ also contains $\alpha_P$, $H$ must be the whole group $G = \PGL(2,q)$. So we can assume that $H$ does not contain $\PSL(2,q)$. We can then use the exact same arguments as for $\PSL(2,q)$, using Theorem \ref{thm:MaxSubPGL} this time, to get to the wanted results.
\end{proof}

We can now prove our second main theorem, that guarantees the existence of rank $3$ hypertopes with automorphism group $\PGL(2,q)$ for any values of $q$ in odd characteristic.
\begin{corollary}\label{coro:PGLnonLinear}
    For any $q = p^n$ with $p$ and odd prime, there exists an hypertope of rank $3$ whose diagram is not linear and whose automorphism group is $\PGL(2,q)$.
\end{corollary}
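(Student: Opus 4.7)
By Theorem \ref{thm:main}, it suffices to exhibit, for each $q = p^n$ with $p$ odd, a strongly non self-polar triangle $\Delta = \{\alpha_P,\alpha_Q,\alpha_R\}$ such that $H = \langle \alpha_P,\alpha_Q,\alpha_R \rangle = \PGL(2,q)$ and no two of the three involutions commute. The last condition forces each of the three orders $O(\alpha_i\alpha_j)$ to be at least $3$, so that the rank-$3$ Buekenhout diagram is a triangle rather than a path, and hence non-linear.

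The strategy is to draw all three involutions from $\PGL(2,q)\setminus\PSL(2,q)$. Proposition \ref{prop:notPSL} then delivers the strong non self-polarity for free as soon as $\{P,Q,R\}$ is a genuine triangle. Geometrically, two involutions $\alpha_X,\alpha_Y$ commute exactly when $X$ lies on the polar $\psi(Y)$, so the no-commutation requirement is an open condition on the triple of centers $\{P,Q,R\}$, as is non-collinearity. By Theorem \ref{thm:generationGeneral} together with $\alpha_P \notin \PSL(2,q)$, the only remaining obstructions to $H = \PGL(2,q)$ are that $H$ be contained in a proper subfield subgroup $\PGL(2,q_0)$ with $q_0 \mid q$ and $q_0 < q$, or that $H$ be isomorphic to one of the exceptional groups $A_5$ or $S_4$.

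A counting argument finishes the job. The set of points of $\pi\setminus\OO$ whose associated involution lies outside $\PSL(2,q)$ has cardinality of order $q^2$, so there are $\sim q^6$ unordered triples to choose from; among these, the triples that are collinear, contain a commuting pair, or sit inside one of the finitely many proper subfield subgroups $\PGL(2,q_0)$ or in an $A_5$ or $S_4$ form a subset of strictly smaller order in $q$. Thus a good triple exists for all $q$ large enough. The main obstacle is handling the finitely many small values of $q$ not covered by the asymptotic count; for these I would either verify the claim by direct computation, or appeal to the enumeration of generating triples of involutions of $\PGL(2,q)$ obtained in \cite{RegularHypermapsPGL} and select from it one with no commuting pair.
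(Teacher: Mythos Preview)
Your overall strategy---draw all three involutions from $\PGL(2,q)\setminus\PSL(2,q)$, invoke Proposition~\ref{prop:notPSL} for strong non self-polarity, and use Theorem~\ref{thm:generationGeneral} to reduce to ruling out subfield and exceptional subgroups---matches the paper exactly. The divergence is in how you rule out $H\subsetneq\PGL(2,q)$.

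The paper does this constructively and uniformly in $q$: it first picks $P$ and a non-tangent line $l$ through $P$, then chooses $Q\in l$ so that $\alpha_P\alpha_Q$ has order $m/2$ with $m=2(q\pm1)$; any third involution $\alpha_R$ (with $R\notin l$ and the polar of $R$ avoiding $P,Q$) then forces $H$ to contain an element of order $q\pm1$, which no proper $\PGL(2,q_0)$ can accommodate. One line kills all subfield subgroups and both exceptional groups at once, with no residual small cases.

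Your counting argument is correct in spirit but is only a sketch: you assert that the bad triples have ``strictly smaller order in $q$'' without estimating the number of conjugates of $\PGL(2,q_0)$ or bounding the sum over divisors $q_0\mid q$. More importantly, even once those estimates are written down, the argument is only asymptotic, and you explicitly defer the small-$q$ range to computation or to \cite{RegularHypermapsPGL}. That is a genuine gap relative to what the corollary claims (``for any $q$''). The paper's device of forcing an element of maximal dihedral order is the missing idea that makes the proof uniform and short; I would replace the counting paragraph with it.
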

\begin{proof}
    Let $\alpha_P$ be any involutions in $\PGL(2,q) - \PSL(2,q)$ and let $l$ be any non-tangent line containing $P$. Then, the stabilizer of $l$ in $\PGL(2,q)$ is a dihedral group of order $m$ where $m = 2(q+1)$ or $m =2(q-1)$. In any case, we can find $\alpha_Q$ such that $Q \in l, \alpha_Q \in \PGL(2,q) - \PSL(2,q)$ and $\alpha_P\alpha_Q$ has order $m/2$. Finally, we choose a third involution $\alpha_R \in \PGL(2,q) - \PSL(2,q)$ such that $R \notin l$ and such that the axis of $\alpha_R$ does not contain neither $P$ nor $Q$. The last conditions guarantees that there is no pair of commuting involutions among the three involutions we chose, so that the diagram will indeed not be linear. The triangle $\Delta = \{\alpha_P,\alpha_Q,\alpha_R \}$ is then strongly not self-polar by \ref{prop:notPSL}. By Theorem \ref{thm:generationGeneral}, the group $H$ generated by the three involutions is isomorphic to $\PGL(2,q_0)$ or $\PSL(2,q_0)$, for some $q_0$ (note that $A_5$ is isomorphic to $\PSL(2,5)$ and $S_4$ to $\PGL(2,3)$). But $\PGL(2,q_0)$ does not contain elements of order $m/2$ if $q_0 < q$. Hence $q_0 = q$ and $H = \PGL(2,q)$. We conclude using Theorem \ref{thm:main}.
\end{proof}

On the other extreme, we can also find conditions to guarantee that the group $H$ generated by the three involutions we choose is isomorphic to $\PGL(2,p)$ or $\PSL(2,p)$.

\begin{comment}

Choose any three points $A,B,C$ on the conic $\OO$. Then, we can consider the triangle formed by the three points $P = T_A \cap T_B, Q = T_B \cap T_C$ and $R = T_A \cap T_C$ and the associated triangle of involutions $\Delta = \{\alpha_P, \alpha_Q,\alpha_R \}$. We say that such a triangle of involutions $\Delta$ is tangent to the conic $\OO$.
    
\end{comment}

\begin{figure}
    \centering
    \begin{tikzpicture}
    \node[anchor=south west,inner sep=0] (image) at (0,0) {\includegraphics[width=0.9\textwidth]{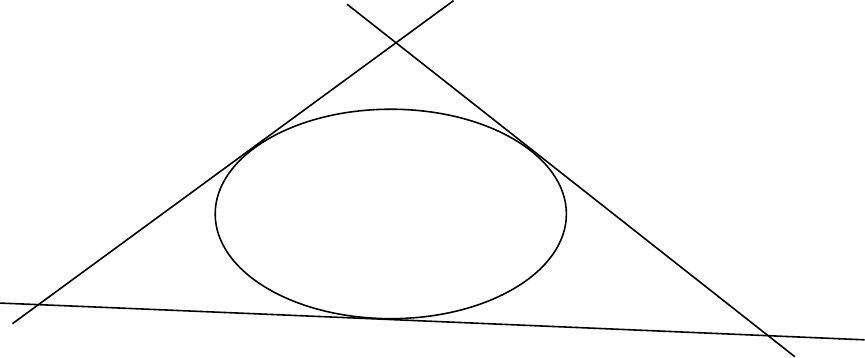}};
    \filldraw[black] (0.62,0.85) circle (2pt)  node[anchor=south]{$P$};
    \filldraw[black] (12.62,0.38) circle (2pt)  node[anchor=south]{$R$};
    \filldraw[black] (6.52,5.21) circle (2pt)  node[anchor=south]{$Q$};
    \filldraw[black] (4.2,3.5) circle (2pt)  node[anchor=south]{$B$};
    \filldraw[black] (8.8,3.4) circle (2pt)  node[anchor=south]{$C$};
    \filldraw[black] (6.3,0.65) circle (2pt)  node[anchor=south]{$A$};
    
    \end{tikzpicture}
    \caption{A tangent triangle with vertices $P,Q$ and $R$. The intersection points with the conics $\OO$ are labeled $A,B$ and $C$. }
    \label{fig:circumscribed}
\end{figure}

\begin{theorem}\label{thm:generationTangent}
    Let $\Delta = \{\alpha_P,\alpha_Q,\alpha_R \}$ be a triangle tangent to $\OO$, then $H = \langle \alpha_P,\alpha_Q,\alpha_R \rangle$ is isomorphic to $\PSL(2,p)$ if $p = 1 \pmod 4$ or to $\PGL(2,p)$ if $p = 3 \pmod 4$. 
\end{theorem}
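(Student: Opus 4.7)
I would work on the conic $\OO$, identified with $\PG(1,q)$ so that $G = \PGL(2,q)$ acts in the standard way. Since each vertex $X \in \{P,Q,R\}$ of the tangent triangle is the intersection of two tangent lines to $\OO$, the involution $\alpha_X$ fixes exactly the two tangent points. Label the three tangent points $A, B, C \in \OO$ so that $\alpha_P$ fixes $\{A,B\}$, $\alpha_Q$ fixes $\{B,C\}$ and $\alpha_R$ fixes $\{A,C\}$; all three are $\mathbb{F}_q$-rational. I then choose the parameterization $\OO \cong \PG(1,q)$ sending $A, B, C$ to $\infty, 0, 1$ respectively. A direct computation of the unique involution of $\PGL(2,q)$ fixing a given rational pair yields
\begin{align*}
 \alpha_P(x) &= -x, \\
 \alpha_Q(x) &= x/(2x-1), \\
 \alpha_R(x) &= 2 - x.
\end{align*}
All three have representatives over the prime field, so $H$ lies in the subfield subgroup $\PGL(2,p) \subset \PGL(2,q)$.

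Next I would show that $H$ cannot be contained in any proper subgroup of $\PGL(2,p)$ other than possibly $\PSL(2,p)$. The product $\alpha_P\alpha_R$ is the translation $x \mapsto x-2$, which is parabolic and hence of order $p$. On the other hand $\alpha_Q(\infty) = 1/2 \neq \infty$, so $\alpha_Q$ does not lie in $\AGL(1,p)$, the stabilizer of $\infty$. From the maximal-subgroup list of $\PGL(2,p)$ given by Theorem \ref{thm:MaxSubPGL} (specialized to $n=1$, so there are no nontrivial subfield groups), the only proper subgroups of $\PGL(2,p)$ whose order is divisible by $p$ are contained in $\AGL(1,p)$. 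Therefore $H$ is not contained in any proper subgroup of $\PGL(2,p)$ that does not already contain $\PSL(2,p)$, which forces $H \supseteq \PSL(2,p)$.

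To decide between $\PSL(2,p)$ and $\PGL(2,p)$, I apply Lemma \ref{lem:fixedpointsonO} to the $\mathbb{F}_p$-subconic $\OO \cap \PG(2,p)$, on which $A, B, C$ are rational by the choice of coordinates. Each $\alpha_X$ fixes two points of this subconic, so the lemma gives $\alpha_X \in \PSL(2,p)$ iff $p \equiv 1 \pmod 4$. When $p \equiv 1 \pmod 4$ all three generators lie in $\PSL(2,p)$, forcing $H = \PSL(2,p)$; when $p \equiv 3 \pmod 4$ all three lie outside $\PSL(2,p)$, forcing $H = \PGL(2,p)$.

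The main obstacle is the first step, namely checking that the three involutions can be simultaneously written with $\mathbb{F}_p$-coefficients. This is exactly where the hypothesis that $\Delta$ is tangent to $\OO$ plays its role: it supplies three $\mathbb{F}_q$-rational points $A, B, C$ of $\OO$ that form a projective frame, and in the adapted coordinates each $\alpha_X$ is diagonal or has prime-field coefficients. The rest is routine subgroup bookkeeping inside $\PGL(2,p)$; the small primes $p \in \{3,5\}$ where Theorem \ref{thm:MaxSubPGL} admits sporadic entries (an $S_4$, or $A_5 \cong \PSL(2,5)$) need only a brief direct check, since no dihedral or alternating/symmetric subgroup of the listed orders can simultaneously accommodate the three given involutions and a $p$-element.
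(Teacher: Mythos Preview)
Your argument is correct. It reaches the same conclusion as the paper but by a more explicit, coordinate-based route.

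The paper argues as follows: since $G$ acts $3$-transitively on $\OO$, one may assume the three tangency points $A,B,C$ lie in a subplane $\pi_p=\PG(2,p)$; then $P,Q,R\in\pi_p$ and $H\le\PGL(2,p)$. For the lower bound the paper invokes Theorem~\ref{thm:generationGeneral}, which for a non-self-polar triangle forces $H$ to be $\PSL(2,q_0)$, $\PGL(2,q_0)$, $A_5$ or $S_4$; it then rules out the last two (for $p\neq 3,5$) using $O(\alpha_P\alpha_Q)=p$. The $\PSL$/$\PGL$ dichotomy is settled via Lemma~\ref{lem:fixedpointsonO}.

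You instead place $A,B,C$ at $\infty,0,1$, write down the three involutions explicitly over $\FF_p$, and show directly that $\alpha_P\alpha_R$ is a translation (order $p$) while $\alpha_Q$ does not fix $\infty$; since the only maximal subgroups of $\PGL(2,p)$ not containing $\PSL(2,p)$ with $p$-divisible order are point stabilizers, and the parabolic element pins down which point stabilizer would be relevant, $H\supseteq\PSL(2,p)$. This bypasses Theorem~\ref{thm:generationGeneral} entirely and is self-contained. The trade-off is that the paper's route reuses machinery already in place and stays coordinate-free, while yours is shorter for this specific statement and makes the $\FF_p$-rationality completely transparent. One small point worth making explicit in your write-up: when you say ``$\alpha_Q$ does not lie in $\AGL(1,p)$'' you should note that the parabolic element $\alpha_P\alpha_R$ fixes \emph{only} $\infty$, so among all conjugate point stabilizers the one at $\infty$ is the only candidate. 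Your caveat about $p=3$ is necessary (there $A_4\le\PGL(2,3)=S_4$ also has $3$-divisible order), and the direct check there is immediate.
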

\begin{proof}
     Suppose that $p = 1 \pmod 4$. We first show that $H \subset \PSL(2,p)$.
     As always, the group $G = \PGL(2,q)$ is acting on $\pi = \PG(2,q)$ as the stabilizer of a conic $\OO$. Under that action, points of $\pi$ that do not lie in $\OO$ correspond bijectively to the center of the involutions of $G$. The group $G$ contains many subgroups isomorphic to $\PGL(2,p)$, which are all conjugate. For each of these subgroups, the set of the centers of their involutions corresponds to the points of a projective plane $\pi_p = \PG(2,p) \subset \pi$. The three involutions $\alpha_P,\alpha_Q$ and $\alpha_R$ are then contained in a subgroup isomorphic to $\PGL(2,p)$ if and only if their centers are in such a subplane $\pi_p \subset \pi$. Let one such subplane $\pi_p$ be fixed once and for all. Let $A,B,C$ be the three points $(PQ) \cap \OO, (QR) \cap \OO$ and $(PR) \cap \OO$, respectively, as illustrated in Figure \ref{fig:circumscribed}. Since $G$ acts $3$-transitively on $\OO$, we can suppose that $A,B,C$ belongs to $\pi_p$. Then, the tangent lines to $\OO$ at $A,B$ and $C$ are lines of $\pi_p$, and so their intersections are in $\pi_p$. But these intersections are precisely $P,Q$ and $R$.

    This shows that $H \subset \PGL(2,p)$. The involutions $\alpha_P,\alpha_Q$ and $\alpha_R$ all have 2 fixed points on $\OO \cap \pi_p$ and $p = 1 \pmod 4$, so Lemma \ref{lem:fixedpointsonO} allows us to conclude that $H \subset \PSL(2,p)$.

    We now show that $\PSL(2,p) \subset H$. We can use Theorem \ref{thm:generationGeneral}, as triangles tangent to $\OO$ are never self-polar. Clearly, the only subgroup of type $\PGL(2,q_0)$ or $\PSL(2,q_0)$, for some $q_0$ dividing $q$, contained in $\PSL(2,p)$ is $\PSL(2,p)$ itself. Finally, unless $p = 3,5$, the group $H$ cannot be isomorphic to $A_5$ or $S_4$. Indeed, the order of $\alpha_P\alpha_Q$ is always $p$. In the case of $p=3$ or $5$, the group $A_5$ is isomorphic to $\PSL(2,5)$ and $A_4$ is isomorphic to $\PGL(2,3)$. So the theorem holds in every case.

    The proof for $p = 3 \pmod 4$ is mostly identical.

\end{proof}

We obtain the following as a direct corollary

\begin{corollary}\label{coro:tangent}
    Let $p = 3 \mod 4$ and let $\Delta = \{\alpha_P,\alpha_Q,\alpha_R \}$ be a triangle tangent to $\OO$. Then $\Gamma = \Gamma(\PGL(2,p),(H_0,H_1,H_2))$ is a rank $3$ hypertope. Moreover, we have that $\Cor(\Gamma)/\Aut(\Gamma) = S_3$.
\end{corollary}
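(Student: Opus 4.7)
For the first assertion, the plan is to concatenate Theorem \ref{thm:generationTangent} with Theorem \ref{thm:main}. Theorem \ref{thm:generationTangent} identifies $H = \langle\alpha_P,\alpha_Q,\alpha_R\rangle$ with $\PGL(2,p)$ since $p \equiv 3 \pmod 4$. Because $P, Q, R$ lie on tangent lines they are exterior to $\OO$, so by Lemma \ref{lem:fixedpointsonO} none of the $\alpha_X$ lies in $\PSL(2,q)$; Proposition \ref{prop:notPSL} then ensures that $\Delta$ is strongly non self-polar. Theorem \ref{thm:main} then yields that $\Gamma = \Gamma(\PGL(2,p),(H_0,H_1,H_2))$ is a regular hypertope of rank $3$.

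For the second assertion, the key observation is that $\Cor(\Gamma)/\Aut(\Gamma)$ is naturally a subgroup of $S_3 = \Sym(I)$ (the image of the action on the $3$-element type set), so one only needs to exhibit, for each $\sigma \in S_3$, a correlation of $\Gamma$ of type $\sigma$. I plan to build these by inner conjugation in $H$: given $g \in H$ with $gH_ig^{-1} = H_{\sigma(i)}$ for every $i$, the assignment $H_ik \mapsto H_{\sigma(i)}\,gkg^{-1}$ defines an incidence-preserving bijection on the elements of $\Gamma$ that shifts types by $\sigma$, hence a correlation of type $\sigma$. The verification is immediate from the definition of the coset geometry and the fact that incidence is given by non-empty intersection.

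To supply such a $g$ for every $\sigma$, the plan is to invoke the sharp $3$-transitivity of $H = \PGL(2,p)$ on $\OO \cap \pi_p \cong \PG(1,p)$, where $\pi_p$ is the subplane from the proof of Theorem \ref{thm:generationTangent}. Writing $A, B, C \in \OO \cap \pi_p$ for the tangency points of the three sides of $\Delta$, so that $P, Q, R$ are the pairwise intersections of the tangents $t_A, t_B, t_C$, any permutation $\tau$ of $\{A, B, C\}$ lifts to some $g_\tau \in H$, which permutes $t_A, t_B, t_C$ and hence $P, Q, R$, and via $g\alpha_X g^{-1} = \alpha_{g(X)}$ finally permutes $\alpha_P, \alpha_Q, \alpha_R$ (and the $H_i$'s) accordingly. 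Since the map ``tangency point $\mapsto$ opposite vertex'' is a bijection $\{A,B,C\} \to \{P,Q,R\}$, the induced homomorphism $S_3 \to S_3$ is onto, producing correlations of all six types. Beyond this purely combinatorial identification between the two copies of $S_3$, I do not anticipate any real technical obstacle; the statement then follows.
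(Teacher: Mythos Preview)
Your proposal is correct and follows essentially the same approach as the paper: combine Theorem~\ref{thm:generationTangent} with Theorem~\ref{thm:main} (via Lemma~\ref{lem:fixedpointsonO} and Proposition~\ref{prop:notPSL}) for the hypertope part, and use the $3$-transitivity of $\PGL(2,p)$ on the tangency points $\{A,B,C\}$ to produce correlations of every type for the $\Cor(\Gamma)/\Aut(\Gamma)=S_3$ part. Your write-up is in fact more careful than the paper's, in particular in working on $\OO\cap\pi_p$ rather than all of $\OO$ and in spelling out explicitly how conjugation by $g_\tau$ induces a correlation of $\Gamma$.
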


\begin{proof}
    Theorem \ref{thm:main} together with Theorem \ref{thm:generationTangent} yield that $\Gamma$ is a rank $3$ hypertope. Referring to Figure \ref{fig:circumscribed} for notations, we see that, since $\PGL(2,p)$ acts $3$-transitively on $\OO$, we can permute as we want the triple of points $\{A,B,C\}$. Any such permutation will also permute the triple $\{P,Q,R\}$ and thus induce a correlation of $\Gamma$.
\end{proof}

We conclude this article by a corollary that states that, when restricted to an appropriate subplane, fields automorphisms actually behave as projectivities.

Let $\pi = \PG(2,q^3)$ be a projective plane of order $q^3$ where $q = p^n$ for some odd prime $p$ and some integer $n$. Let $[X_0:X_1:X_2]$ be homogeneous coordinates for $\pi$ and let $\tau \colon \FF_{q^3} \to \FF_{q^3}$ be the automorphism of $\FF_{q^3}$ sending $x$ to $x^q$ for all $x \in \FF_{q^3}$, so that $\tau$ has order $3$. By abuse of notation, we will also consider $\tau$ as being a collineation of $\pi$ and an automorphism of $G = \PGL(2,q^3)$.

We will say that a triangle $\Delta = \{\alpha_P,\alpha_Q,\alpha_R \}$ is a $\tau-$triangle if $Q = \tau(P)$ and $R = \tau^2(P)$.
\begin{corollary}\label{coro:coliToproj}
    Let $\tau$ be the collineation of order $3$ of $\pi = \PG(2,p^3)$ coming from the Frobenius automorphism. Then there exists a projective plane $\pi_p = \PG(2,p)$ embedded in $\pi$ such that the restriction of $\tau$ to $\pi_p$ is a projectivity.
\end{corollary}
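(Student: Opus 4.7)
The plan is to construct a suitable subplane $\pi_p \subset \pi$ from a tangent triangle whose vertices form a $\tau$-orbit, and then to exploit the fact that collineations of $\PG(2,p)$ are automatically projectivities.

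First I would produce a tangent $\tau$-triangle. Pick a point $A \in \OO$ not fixed by $\tau$; such a point exists because $|\OO| = p^3 + 1$ while $\tau$ fixes at most $p+1$ points of $\OO$. Set $B = \tau(A)$ and $C = \tau^2(A)$, so that $\{A,B,C\}$ is a $\tau$-orbit of length $3$ on $\OO$. Let $t_A, t_B, t_C$ be the corresponding tangent lines and define $P = t_B \cap t_C$, $Q = t_A \cap t_C$, $R = t_A \cap t_B$. Then $\tau$ cyclically permutes $\{t_A, t_B, t_C\}$, hence cyclically permutes $\{P,Q,R\}$, and $\Delta = \{\alpha_P, \alpha_Q, \alpha_R\}$ is a triangle tangent to $\OO$ whose vertex set is a $\tau$-orbit.

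Next I would invoke Theorem \ref{thm:generationTangent} to conclude that $H = \langle \alpha_P, \alpha_Q, \alpha_R \rangle$ is isomorphic to $\PGL(2,p)$ or $\PSL(2,p)$ and, as argued in its proof, that the centers of the involutions of $H$ are precisely the non-conic points of a subplane $\pi_p \cong \PG(2,p) \subset \pi$. To see that $\tau$ preserves $\pi_p$ setwise, recall that for any collineation $\phi$ and any $X \in \pi \setminus \OO$ one has $\phi \alpha_X \phi^{-1} = \alpha_{\phi(X)}$. Since $\tau$ cyclically permutes the three generators of $H$, it normalises $H$, and hence permutes the involutions of $H$ and so their centers. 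This shows that $\tau$ maps the set of $p^3$ non-conic points of $\pi_p$ to itself; since a subplane is determined by any four of its points in general position, this forces $\tau(\pi_p) = \pi_p$ (working with a conic $\OO$ defined over $\FF_p$ so that $\tau(\OO) = \OO$).

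Finally, the restriction $\tau|_{\pi_p}$ is a collineation of $\pi_p \cong \PG(2,p)$. Since $p$ is prime, $\FF_p$ admits no non-trivial automorphisms, so $\PGamma(3,p) = \PGL(3,p)$: every collineation of $\PG(2,p)$ is a projectivity. Thus $\tau|_{\pi_p}$ is a projectivity, as claimed. The main conceptual point is the observation that $\tau$ normalises $H$ and hence preserves the attached subplane $\pi_p$; once this is established, the conclusion is immediate from the identification of collineations with projectivities over the prime field $\FF_p$.
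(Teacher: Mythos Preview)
Your argument is correct and shares the same setup as the paper (choosing a $\tau$-orbit $\{A,B,C\}$ on $\OO$, building the tangent triangle, invoking Theorem~\ref{thm:generationTangent} to obtain $H$ and the subplane $\pi_p$, and noting that $\tau$ normalises $H$). The difference lies in the final step. The paper argues \emph{explicitly}: since $\PGL(2,p)$ acts sharply $3$-transitively on $\OO_p = \OO \cap \pi_p$, there is a unique $g \in \PGL(2,p)$ cyclically permuting $A,B,C$; as conjugation by $g$ agrees with $\tau$ on the generators of $H$, the action of $\tau$ on $H$ is inner, hence induced by the projectivity $g$ on $\pi_p$. Your route is more abstract: you observe that $\tau|_{\pi_p}$ is a collineation of $\PG(2,p)$, and then invoke $\PGamma(3,p) = \PGL(3,p)$ (no non-trivial automorphisms of $\FF_p$) to conclude it is a projectivity. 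Your approach is quicker and avoids the $3$-transitivity argument; the paper's approach has the advantage of pinning down the projectivity concretely as an element of $\PGL(2,p)$, not merely of $\PGL(3,p)$.

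Two small points to tidy up. First, $\pi_p$ has $p^2+p+1$ points, so the number of non-conic points is $p^2$, not $p^3$. Second, your claim that the centers of the involutions of $H$ are \emph{precisely} the non-conic points of $\pi_p$ holds only when $H \cong \PGL(2,p)$; if $H \cong \PSL(2,p)$ you only get half of them. This does not damage the argument, since in either case the centers (together with $A,B,C$) contain four points in general position, which suffice to determine $\pi_p$ and hence to conclude $\tau(\pi_p)=\pi_p$.
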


\begin{proof}
    Let $\Delta$ be a $\tau$-triangle tangent to $\OO$. This clearly exists, as it suffice to take $A,B$ and $C$ in Figure \ref{fig:circumscribed} such that $B = \tau(A)$ and $C = \tau^2(A)$. Then, by Theorem \ref{thm:generationTangent}, we know that the group $H$ generated by the three involutions of $\Delta$ is isomorphic to $\PGL(2,p)$ or $\PSL(2,p)$ and that $A,B$ and $C$ belong to a subplane $\pi_p = \PG(2,p) \subset \pi$ (see proof of Theorem \ref{thm:generationTangent}). Note that, under this settings, $\tau$ induces an automorphism of $H$. As $\PGL(2,p)$ acts sharply $3-$transitively on $\OO_p = \OO \cap \pi_p$, we see that there exists a unique element $g$ of $\PGL(2,p)$ that permutes $A,B$ and $C$ cyclically. Conjugation by $g$ must then be equal to the action of $\tau$ on $H$, since they both act in the same way on the generators of $H$. Therefore, $\tau$ acts as an inner conjugation on $H$, and thus as a projectivity on the associated projective plane $\pi_p$.
    
\end{proof}

\section{Further work}

In this article, we have showed, in Theorem \ref{thm:main}, that a triangle $\Delta = \{ \alpha_P,\alpha_Q,\alpha_R \}$ that is strongly non self-polar can always be used to construct a regular hypertope $\Gamma$. Moreover, by Proposition \ref{prop:notPSL}, we know that we can guarantee that $\Delta$ is strongly non self-polar by choosing involutions $\alpha_P,\alpha_Q$ and $\alpha_R$ that are not contained in $\PSL(2,q)$. A natural follow up is to find condition for a triangle $\Delta$ of involutions all contained in $\PSL(2,q)$ to be strongly non self-polar.

\begin{question}
    How to choose three involutions $\alpha_P,\alpha_Q$ and $\alpha_R$ in $\PSL(2,q)$ in order to guarantee that $\Delta = \{ \alpha_P,\alpha_Q,\alpha_R \}$ is strongly non self-polar?
\end{question}

Computational data obtained with {\sc Magma} suggest that not only strongly non self-polar triangle in $\PSL(2,q)$ do exist for every value of $q$, but also that almost all triangle $\Delta = \{ \alpha_P,\alpha_Q,\alpha_R \}$ such that $\PSL(2,q) = \langle \alpha_P,\alpha_Q,\alpha_R \rangle$ are strongly non self-polar.

There has been recent interest in geometries that admit trialities but no dualities (\cite{leemans2023flag}, \cite{leemans2022incidence}). Computational evidence again suggests that these geometries exist for $\PGL(2,q)$ and $\PSL(2,q)$ whenever $q$ is a cube. Let $q$ be a cube and let $\tau$ be the Frobenius automorphism of order $3$, seen as a collineation of $\pi = \PG(2,q)$.

\begin{question}
    Can we find an involution $\alpha_P$ in $\PGL(2,q)$ such that the $\tau-$triangle $\Delta_\tau = \{ \alpha_P,\alpha_{\tau(P)}, \alpha_{\tau^2(P)}\}$ is strongly non self-polar and show that the associated regular hypertope $\Gamma$ has no dualities?
\end{question}
Remark that choosing $\alpha_P$ not contained in $\PSL(2,q)$ guarantees that $\Delta_\tau$ will be strongly non self-polar. To guarantee that the three involutions generate $\PGL(2,q)$, it would then suffice to show that $\alpha_P$ can be chosen such that $\Delta_\tau$ is proper.
\begin{comment}

\begin{corollary}
    For any $q = p^n$ for $p$ and odd integer, there exists a rank $3$ hypertope $\Gamma$ such that $\Aut(\Gamma) = \PGL(2,q)$ and $\Gamma$ admits trialities.
\end{corollary}
\begin{proof}
    CLAIM TO PROVE: There exists $\alpha_P \in \PGL(2,q) - \PSL(2,q)$ such that $\PGL(2,q) = \langle \alpha_P, \alpha_{\tau(P)}, \alpha_{\tau^2(P)} \rangle$.

    We can then use Proposition \ref{prop:notPSL}, since Lemma \ref{lem:proper-polar} guarantees that $\Delta$ is proper.

    The existence of the triality is obvious.
    
\end{proof}

\begin{corollary}
    For any $q = p^{3n}$, there exists a thin, residually connected and flag transitive incidence geometry $\Gamma$ with automorphism group $\PGL(2,q)$. Moreover $\Gamma$ admits trialities but no dualties.
\end{corollary}
\end{comment}

\bibliographystyle{abbrv} 
\bibliography{refs}
\end{document}